\newtheorem{thm}{Theorem}
\newtheorem{lem}[thm]{Lemma}
\newtheorem{rem}{Remark}
\journal{https://arxiv.org}
\begin{document}

\begin{frontmatter}

\title{\Large $L^p$-$L^q$ boundedness of integral operators with oscillatory kernels: Linear versus quadratic phases}

\author{Ahmed A. Abdelhakim}
\begin{abstract}
Let $\,T^{j,k}_{N}:L^{p}(B)\,
\rightarrow\,L^{q}([0,1])\,$
be the oscillatory integral operators defined by
$\;\displaystyle T^{j,k}_{N}f(s):=\int_{B}
\,f(x)\,e^{\imath N{|x|}^{j}s^{k}}\,dx,
\quad (j,k)\in\{1,2\}^{2},\,$
where $\,B\,$ is the unit ball in ${\mathbb{R}}^{n}\,$
and $\,N\,>>1.$ We compare the asymptotic behaviour as $\,N\rightarrow +\infty\,$ of the operator norms
 $\,\parallel T^{j,k}_{N} \parallel_
{L^{p}(B)\rightarrow L^{q}([0,1])}\,$
for all
$\,p,\,q\in [1,+\infty].\,$
We prove that, except for the dimension $n=1,\,$
this asymptotic behaviour depends on the linearity or quadraticity of the phase in $s$ only.
We are led to this problem by an observation on
inhomogeneous Strichartz estimates for the Schr\"{o}dinger equation.
\end{abstract}

\begin{keyword}
Strichartz estimates for the Schr\"{o}dinger equation \sep
Oscillatory integrals\sep $L^{p}-L^{q}$ boundedness
\MSC[2010] 35B45, 35Q55, 42B20
\end{keyword}

\end{frontmatter}

\subsubsection*{1. A remark on a counterexample
to inhomogeneous Strichartz estimates for
the Schr\"{o}dinger equation and motivation}
Consider the Cauchy problem
for the inhomogeneous free Schr\"{o}dinger  equation with zero initial data
\begin{align}\label{shreq}
\imath \partial_{t}u+\Delta u\,=\, F(t,x),\qquad (t,x)\in (0,\infty)\times{\mathbb{R}}^{n},\qquad u(0,x)\,=\,0.
\end{align}
Space time estimates of the form
\begin{align}\label{est1}
||u||_{L^{q}_{t}\left(
\mathbb{R};L^{r}_{x}({\mathbb{R}}^{n})\right)}\;\lesssim\;
||F||_{L^{{\widetilde{q}}^{\prime}}_{t}
\left(\mathbb{R};L^{{\widetilde{r}}
^{\prime}}_{x}({\mathbb{R}}^{n})\right)},
\end{align}
have been known as inhomogeneous Strichartz estimates.
The results obtained so far (see \cite{damianoinhom,
Kato,keeltao,vilela,YoungwooKoh})
are not conclusive when it comes to determining  the optimal values of the Lebesue exponents $\,q$, $r$, $\tilde{q}\,$ and $\,\tilde{r}\,$ for which
the estimate (\ref{est1}) holds. Trying to further understand this problem, we \cite{ahmed1} found
new necessary conditions on these exponents values.
The counterexample in \cite{ahmed1}, like
Example 6.10 in \cite{damianoinhom},
contains an oscillatory factor with high frequency. More precisely, we used
a forcing term given by
\begin{align}\label{myphase}
 F(t,x)= e^{-\imath\, N^2\,t } \,\chi_{[0,\frac{\eta}{N}]}(t)\,
 \chi_{B\left(\frac{\eta}{N}\right)}{(x)}
\end{align}
where $\,\eta>0\,$ is a fixed small number,
$\,N>>1\,$ and $ B\left(\frac{\eta}{N}\right)$ is the ball with radius $\,\eta/N\,$ about the origin.
While in \cite{damianoinhom}
the stationary phase method is applied to
the inhomogeneity
\begin{align}\label{fphase}
 F (t,x)= e^{-2\imath\, N^2\,t^{2} }\,\chi_{[0,1]}(t)\,\chi_{B\left(\frac{\eta}{N}\right)}{(x)}.
\end{align}
When $\,t\in [2,3],\,$
both data in (\ref{myphase}) and (\ref{fphase})
force the corresponding solution $u(t,x)$ to concentrate in a spherical shell centered at the origin with radius about $N.$ This agrees with the dispersive nature
of the Schr\"{o}dinger operator.
The shell thickness is different in both cases
though. It is about $1$ in the case of the data
(\ref{myphase}) but about $N$ in the case
of (\ref{fphase}). The necessary conditions obtained
are respectively
\begin{align*}
\frac{1}{q}\geq\frac{n-1}{\widetilde{r}}-\frac{n}{r},
\qquad  \quad\frac{1}{\widetilde{q}}\geq\frac{n-1}{r}-
\frac{n}{\widetilde{r}}
\end{align*}
and
\begin{align}\label{necess1}
|\frac{1}{r}-\frac{1}{\widetilde{r}}|\leq
\frac{1}{n}.
\end{align}
Observe that the oscillatory function in (\ref{myphase})
has a linear phase and is applied for the short time period of length $\:1/\sqrt{\text{frequency}}.\,$
The oscillatory function in (\ref{fphase})
on the other hand has a quadratic phase and
the oscillation is put to work for a whole time unit.
We noticed that the phase in \cite{damianoinhom}
need not be quadratic and we can get
the necessary condition (\ref{necess1}) using the data
\begin{align}\label{fphase1}
 F_{l} (t,x)= e^{-\imath\, N^2\,t}\,
 \chi_{[0,1]}(t)\,\chi_{B\left(\frac{\eta}{N}\right)}{(x)}
\end{align}
where the phase in the oscillatory function is linear.
Before we show this, we recall the following
approximation of oscillatory integrals according to
the principle of stationary phase.
\begin{lem}\label{stationary}
(see \cite{stein}, Proposition 2 Chapter VIII
and Lemma 5.6 in \cite{damianorem})
Consider the oscillatory integral $\;I(\lambda)=\displaystyle \int_{a}^{b}e^{\imath \lambda \phi(s)}\psi(s)d s.\;$ Let the phase $\,\phi \in C^5([a,b])\,$ and the amplitude $\psi\in C^3([a,b])$
such that
\begin{description}
\vspace*{-0.22 cm}
\item[(i)] \hspace{0.1 cm} $\;\phi^{\prime} (z)=0\,$ for a point $\;z\,\in\, ]a+c, b-c[\;$ with $\,c\,$ a positive constant,
\vspace*{-0.22 cm}
\item[(ii)] \hspace{0.0001 cm} $\;|\phi^{\prime} (s)|\,\gtrsim\, 1,\;$
for all $\;s\,\in\, [a, a+c]\,\cup\, [b-c, b],$
\vspace*{-0.22 cm}
\item[(iii)] $\;|\phi^{\prime\prime} (s)|\,\gtrsim\, 1,$
\vspace*{-0.22 cm}
\item[(iv)] $\;\psi^{(j)}\,$ and $\,\phi^{(j+3)}\;$
are uniformly bounded on $[a,b]$ for all $j=0,1,2$.\vspace*{-0.22 cm}
\end{description}
 \begin{align*}
\hspace*{-1 cm}\mbox{Then}\qquad \qquad  I(\lambda)\,=\, {\,\sqrt{\frac{2 \pi}{\lambda|\phi^{\prime\prime} (z)|}} \,\psi(z)\,e^{\imath \,\lambda\, \phi(z)+\imath\,\mbox{\small sgn}\left(
\phi^{\prime\prime} (z)\right)\, \frac{\pi}{4}}}+\mathcal{O}\left(\frac{1}{\lambda}\right),
\vspace*{-0.22 cm}
\end{align*}
where the implicit constant in the $\mathcal{O}-$symbol is absolute.
\end{lem}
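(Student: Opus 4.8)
\noindent\emph{Proposed proof.} The plan is the standard localisation argument of stationary phase: isolate a small arc around the critical point $z$, dispose of the rest by non-stationary phase (integration by parts), and compute the contribution of the arc by bringing $\phi$ into its Morse normal form and reducing to a Fresnel integral.

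\emph{Reduction near $z$.} Fix $\chi\in C_c^{\infty}(\mathbb{R})$ with $\chi\equiv 1$ in a neighbourhood of $z$ and $\operatorname{supp}\chi\subset(z-c,z+c)$, small enough for the substitution below to be a diffeomorphism on $\operatorname{supp}\chi$, and split $I(\lambda)=I_{1}(\lambda)+I_{2}(\lambda)$ with amplitudes $\psi\chi$ and $\psi(1-\chi)$ respectively. On $\operatorname{supp}(1-\chi)$ one has $|\phi'|\gtrsim 1$: near the endpoints this is $\mathrm{(ii)}$, while on the remaining part $\phi''$ keeps a fixed sign by $\mathrm{(iii)}$ and continuity, so $|\phi'(s)|=\bigl|\int_{z}^{s}\phi''(t)\,dt\bigr|\geq|s-z|\inf|\phi''|\gtrsim 1$ once $|s-z|$ is bounded below. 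One integration by parts, $I_{2}(\lambda)=\bigl[\tfrac{\psi(1-\chi)}{\imath\lambda\phi'}e^{\imath\lambda\phi}\bigr]_{a}^{b}-\tfrac{1}{\imath\lambda}\int_{a}^{b}e^{\imath\lambda\phi}\bigl(\tfrac{\psi(1-\chi)}{\phi'}\bigr)'\,ds$, then yields $I_{2}(\lambda)=\mathcal{O}(1/\lambda)$, the boundary term and the integrand being controlled by $\mathrm{(iv)}$ together with $|\phi'|\gtrsim 1$ on $\operatorname{supp}(1-\chi)$.

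\emph{Morse normal form and the Fresnel integral.} Write $\phi(s)-\phi(z)=(s-z)^{2}h(s)$ with $h(s)=\int_{0}^{1}(1-\sigma)\phi''(z+\sigma(s-z))\,d\sigma$; then $h\in C^{3}$ (two derivatives are lost from $\phi\in C^{5}$), and by $\mathrm{(iii)}$ the function $h$ has the constant sign $\epsilon:=\operatorname{sgn}\phi''(z)$ with $|h|\gtrsim 1$. Set $u:=(s-z)\sqrt{2\epsilon h(s)}$, a $C^{3}$ map with $u(z)=0$ and $u'(z)=\sqrt{|\phi''(z)|}>0$; shrinking $\operatorname{supp}\chi$ if needed, $s\mapsto u$ is a $C^{3}$ diffeomorphism there whose inverse $s(u)$ satisfies $s(0)=z$, $s'(0)=|\phi''(z)|^{-1/2}$, and $\phi(s(u))=\phi(z)+\tfrac{\epsilon}{2}u^{2}$. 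Hence $I_{1}(\lambda)=e^{\imath\lambda\phi(z)}\int_{\mathbb{R}}e^{\imath\lambda\epsilon u^{2}/2}g(u)\,du$ with $g:=((\psi\chi)\circ s)\,s'\in C_c^{2}$ and $g(0)=\psi(z)|\phi''(z)|^{-1/2}$. Fix $\eta\in C_c^{\infty}$ with $\eta\equiv 1$ near $0$ and write $g=g(0)\eta+r$, so $r\in C_c^{2}$ and $r(0)=0$. For the first piece, $\int_{\mathbb{R}}e^{\imath\lambda\epsilon u^{2}/2}\,du=\sqrt{2\pi/\lambda}\,e^{\imath\epsilon\pi/4}$ (Fresnel integral), while $\int e^{\imath\lambda\epsilon u^{2}/2}(1-\eta)\,du=\mathcal{O}(1/\lambda)$ since $1-\eta$ vanishes near $0$ and the phase derivative $\epsilon u$ is monotone with $|\epsilon u|\gtrsim 1$ on its support (one integration by parts). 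For the second, writing $r(u)=u\rho(u)$ with $\rho(u)=\int_{0}^{1}r'(tu)\,dt\in C_c^{1}$ gives $\int e^{\imath\lambda\epsilon u^{2}/2}u\rho\,du=-\tfrac{1}{\imath\lambda\epsilon}\int e^{\imath\lambda\epsilon u^{2}/2}\rho'\,du=\mathcal{O}(1/\lambda)$. Collecting, $\int e^{\imath\lambda\epsilon u^{2}/2}g\,du=g(0)\sqrt{2\pi/\lambda}\,e^{\imath\epsilon\pi/4}+\mathcal{O}(1/\lambda)$, so that $I_{1}(\lambda)=\sqrt{\tfrac{2\pi}{\lambda|\phi''(z)|}}\,\psi(z)\,e^{\imath\lambda\phi(z)+\imath\operatorname{sgn}(\phi''(z))\pi/4}+\mathcal{O}(1/\lambda)$. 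Adding $I_{2}(\lambda)=\mathcal{O}(1/\lambda)$ proves the lemma, all estimates being uniform in $\lambda$ with implicit constants depending only on $c$, on the implicit constants in $\mathrm{(ii)}$--$\mathrm{(iii)}$, and on the uniform bounds in $\mathrm{(iv)}$.

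\emph{Main difficulty.} The delicate point is the bookkeeping of regularity: the Morse substitution must be justified with precisely the smoothness available, $\phi\in C^{5}$ and $\psi\in C^{3}$, and this is exactly what makes the transformed amplitude $g$ of class $C^{2}$, which is in turn exactly what is needed to extract the leading term with an error of size $\mathcal{O}(1/\lambda)$ and no smaller — one factors out the vanishing of the remainder $r$ at the critical point and integrates by parts a single time. A secondary nuisance is verifying that the diffeomorphism $s\mapsto u$, the Fresnel evaluation, and the tail estimate for $\int e^{\imath\lambda\epsilon u^{2}/2}(1-\eta)$ all come with constants independent of $\lambda$ and controlled by the structural data of the hypotheses. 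An alternative that bypasses the cutoff entirely is to apply the Morse diffeomorphism globally on $[a,b]$ — legitimate because $\phi''$ has a constant sign there and the endpoints map to points at distance $\gtrsim c$ from $0$ by $\mathrm{(i)}$ and $\mathrm{(iii)}$ — and to absorb the resulting boundary terms into $\mathcal{O}(1/\lambda)$; the localised version above is, however, cleaner to write out.
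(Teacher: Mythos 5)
The paper does not prove this lemma: it is quoted as a known fact with citations to Stein's Proposition~2 of Chapter~VIII and Foschi's Lemma~5.6, so there is no in-paper argument to compare against. Your proof is correct and is the standard one underlying both references: localize near the critical point with a cutoff $\chi$, dispose of $\operatorname{supp}(1-\chi)$ by a single non-stationary-phase integration by parts (using $\mathrm{(ii)}$ near the endpoints and $\mathrm{(iii)}$ plus $|s-z|\gtrsim 1$ elsewhere to get $|\phi'|\gtrsim 1$), bring $\phi$ to Morse normal form via $u=(s-z)\sqrt{2\epsilon h(s)}$, and extract the Fresnel leading term by writing the transformed amplitude as $g(0)\eta+r$ with $r(0)=0$. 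The regularity bookkeeping — which is the only genuinely delicate point — is done correctly: $\phi\in C^5$ gives $h\in C^3$, hence $s(u)\in C^3$ and $s'\in C^2$, so together with $\psi\in C^3$ the amplitude $g$ is $C^2$, exactly enough to factor $r(u)=u\rho(u)$ with $\rho\in C_c^1$ and integrate by parts once to get $\mathcal{O}(1/\lambda)$. One small point worth spelling out: $1-\eta$ is not compactly supported, so after the integration by parts the integrand $\partial_u\bigl((1-\eta)/u\bigr)=-\eta'/u-(1-\eta)/u^2$ is integrable only because the first piece is compactly supported away from $0$ and the second decays like $u^{-2}$ while vanishing near $0$, and the boundary terms at $\pm\infty$ vanish since $(1-\eta)/u\to 0$; you assert the estimate but do not record these facts. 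Your concluding remark on the uniformity of the implicit constants in $c$ and in $\mathrm{(ii)}$--$\mathrm{(iv)}$ is the right one and is needed for the lemma's claim of an absolute $\mathcal{O}$-constant.
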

The norm of the inhomogeneous term $F_{l} $
in (\ref{fphase1}) has the estimate
\begin{align}\label{normf}
\parallel F_{l} \parallel_{L^{\tilde{q}^{\prime}}
([0,1];L^{\tilde{r}^{\prime}}(\mathbb{R}^{n}))}\,\approx\,
{\eta}^{n-\frac{n}{\tilde{r}}}\,
{N}^{-n}\,{N}^{\frac{n}{\tilde{r}}}.
\end{align}
For the solution of (\ref{shreq}), we have the explicit formula
\begin{align}\label{solshro}
 u(t,x) \,=\, (4 \pi)^{-\frac{n}{2}}\int_{0}^{t}
(t-s)^{-\frac{n}{2}}\int_{{\mathbb{R}}^{n}}
e^{\imath\frac{|x-y|^2}{4(t-s)}}\,F(s,y)\,dy\, ds.
\end{align}
Let us estimate the solution
$u_{l}(t,x)$ that corresponds to $F_{l}.$
We shall restrict our attention to the region
\begin{align*}
\Omega_{\eta,N}=\left\{
(t,x)\in [2,3] \times \mathbb{R}^{n}\!\!:\,
2(t-{3}/{4})N+\eta N^{-1}\,<|x|<\,2(t-{1}/{4})N-\eta N^{-1}\right\}.
\end{align*}
It will be momentarily  seen
that this is the region
where we can exploit Lemma \ref{stationary} to approximate $\, u_{l}(t,x).$
Substituting from (\ref{fphase1}) into (\ref{solshro})
then applying Fubini's theorem we get
\begin{align}\label{corsol}
 u_{l}(t,x)\:=\:
(4\pi)^{-\frac{n}{2}} \,\int_{B({\eta}/{N})}
  \,I_{N}(t,x,y)\, d y
\end{align}
where  $\,I_{N}(t,x,y)\,$ is the oscillatory integral
\begin{align}\label{inoscints}
I_{N}(t,x,y)\;=\;\int_{0}^{1}\,
 e^{\imath N^2\, \phi_{N}{(s,t,x,y)}}\,\psi{(s,t)}\, d s,
\end{align}
with the phase $\;
\displaystyle \phi_{N}{(s,t,x,y)} =\frac{ |x-y|^2}{4\,N^2}\frac{1}{t-s}-s\,$
 and amplitude $\, \psi{(s,t)}=(t-s)^{-\frac{n}{2}}.$
\vspace{0.1 cm} For simplicity, we write $\phi(.)$ and $\psi(.)$ in place of $\,\phi_{N}{(.,t,x,y)}\,$ and $\,\psi{(.,t)}\,$ respectively.
Next, we verify the conditions
(\textbf{i}) - (\textbf{iv}) for $\,\phi\,$ and $\,\psi.$
Let $\,(t,x)\in \Omega_{\eta,N}\,$ and
$\,y\in B(\eta/N).$ Observe then that
$\;\displaystyle t-{3}/{4}<{|x-y|}/{2N}<t-{1}/{4}\;$
and $\; t-s \in [1,3]. $ Therefore
\begin{description}
\item[(i)] If $\,z\,$ is such that $\,\phi^{\prime}(z)=0\,$
then $\,\displaystyle
 z =t-{|x-y|}/{2N}.\,$ Moreover,
$\displaystyle \,z\in\:]{1}/{4},{3}/{4} [.$
\vspace*{-0.15 cm}
\item[(ii)] $\phi^{\prime}$ is monotonically increaing
so $\;\displaystyle \min_{s\in[0,1]}{\phi^{\prime}(s)}
= \phi^{\prime}(0)=\frac{|x-y|^2}{4 N^2\, t^2}
> \left(1-\frac{3}{4t}\right)^{2}
\gtrsim 1.$
\vspace*{-0.25 cm}
\item[(iii)]$\,\displaystyle
\phi^{\prime\prime}(s)\,=\,
\frac{|x-y|^2}{2 N^2}\frac{1 }{(t- s)^3}\,\approx\,1.$
\vspace*{-0.2 cm}
\item[(iv)]  $\;\displaystyle \phi^{(j)}(s)\,=\,
\frac{|x-y|^2}{4N^2}\frac{ j!}{(t- s)^{(j+1)}} \,\approx\,1,\; j=3,4,5$,
$\;\;\;\psi(s)\,=\,
(t- s)^{-\frac{n}{2}}
\,\approx\,1$,\\[0.15 cm]
$\psi^{\prime}(s)\,=\,
\frac{n}{2}(t- s)^{-\frac{n}{2}-1}
\,\approx\,1$, $\qquad \psi^{\prime\prime}(s)\,=\,
\frac{n}{2}(\frac{n}{2}+1)(t- s)^{-\frac{n}{2}-2}\,\approx\,1.$
\end{description}
Now, applying Lemma \ref{stationary} to the oscillatory integral $\,I_{N}(t,x,y)\,$ in (\ref{inoscints}) yields
\begin{align}\label{noosciny0}
I_{N}(t,x,y)\,=\,
{\,\sqrt{\frac{2 \pi}{\phi_N^{\prime\prime}(z,t,x,y)}} \,\psi(z,t)\,
\frac{e^{ \frac{\pi}{4} \imath}}{N}
\,e^{\imath N^2 \phi_N(z,t,x,y)}}+
\mathcal{O}\left(\frac{1}{N^2}\right).
\end{align}
Since $\,\phi_N(z,t,x,y)+t=|x-y|/N\, $ and
since $\, N\left(|x-y|-|x|\right)= \mathcal{O}\left(\eta\right)\,$
whenever \\ $\,(t,x)\in \Omega_{\eta,N},\;y\in B(\eta/N).$
Then
$\,\,N^2\,\phi_N(z,t,x,y)+N^2\,t
=N\,|x|+\mathcal{O}\left(\eta\right).$ Hence
\begin{align}\label{noosciny}
e^{\imath N^2\,\phi_N(z,t,x,y)}=
e^{\imath\left(N\,|x|-N^2\,t\right)}
\,e^{\mathcal{O}\left(\eta\right)}
=e^{\imath\left(N\,|x|-N^2\,t\right)}\,
\left(1+\mathcal{O}\left(\eta\right)\right).
\end{align}
Inserting (\ref{noosciny}) into (\ref{noosciny0})
then returning to (\ref{corsol}), we discover
\begin{align*}
 u_{l}(t,x)\:=\:&
\frac{(4\pi)^{\frac{1-n}{2}} }{\sqrt{2}}\frac{e^{ \frac{\pi}{4} \imath}}{N} \,e^{\imath\left(N\,|x|-N^2\,t\right)}\,
\int_{B({\eta}/{N})}\,{
\,\frac{\psi(z,t)}{\sqrt{\phi_N^{\prime\prime}(z,
t,x,y)}} \,
\,\left(1+\mathcal{O}\left(\eta\right)\right)}
\,d y\\&\;+
\mathcal{O}\left(\frac{1}{N^2}\right)\,
\int_{B({\eta}/{N})}\,\,
d y.
\end{align*}
Recalling that $\,\psi,\:
\phi^{\prime\prime}\approx 1,\,$ we immediately deduce
the estimate
\begin{align*}
&| u_{l}(t,x)|\,\gtrsim\,\frac{|B(\eta/N)|}{N}
\,\approx\,\eta^{n}\,N^{-(1+n)},\quad
(t,x)\in \Omega_{\eta,N}.\quad \text{Thus, for all}\;\;
t\in [2,3],\\
&\hspace*{-1 cm}
||u_{l}(t,x)||_{L^{r}_{x}\left({\mathbb{R}}^{n}\right)}
\,\geq\,
\left( \int_{2(t-{3}/{4})N+\eta N^{-1}\,<\,|x|\,<\,2(t-{1}/{4})N-\eta N^{-1}}\,
| u_{l}(t,x)|^{r}\,dx\,\right)^{\frac{1}{r}}
\,\gtrsim\,\eta^{n}\,N^{-(1+n)+\frac{n}{r}}.
\end{align*}
Consequently
\begin{align}\label{normul}
||u_{l}||_{L^{q}_{t}\left(
\mathbb{R};L^{r}_{x}({\mathbb{R}}^{n})\right)}
\,\geq\,||u_{l}||_{L^{q}_{t}\left(
[2,3];L^{r}_{x}({\mathbb{R}}^{n})\right)}
\,\gtrsim\,\eta^{n}\,N^{-(1+n)+\frac{n}{r}}.
\end{align}
Lastly, it follows from (\ref{normf}) and
(\ref{normul}) that
\begin{align*}
||u_{l}||_{L^{q}_{t}\left(
\mathbb{R};L^{r}_{x}({\mathbb{R}}^{n})\right)}/
\parallel F_{l} \parallel_{L^{\tilde{q}^{\prime}}
([0,1];L^{\tilde{r}^{\prime}}(\mathbb{R}^{n}))}\;
\gtrsim\;
\eta^{\frac{n}{\tilde{r}}}\,
N^{\frac{n}{r}-\frac{n}{\tilde{r}}-1}
\end{align*}
which, for a fixed $\,\eta,\,$ blows up
as $\,N\rightarrow +\infty\,$
if $\,\displaystyle \frac{n}{r}-\frac{n}{\tilde{r}}>1.\,$
In the light of duality this implies
the necessary condition (\ref{necess1}).

These examples made us wonder
how exactly different are linear oscillations from
quadratic ones if we capture the cancellations in
Lebesgue spaces.
One way to see this is to
consider the operators
$\,T^{j,k}_{N}:L^{p}(B)\,
\rightarrow\,L^{q}([0,1])\,$ defined by
\begin{align}\label{intop}
T^{j,k}_{N}f(s):=\int_{B}
\,f(x)\,e^{\imath N{|x|}^{j}s^{k}}\,dx,
\qquad (j,k)\in\{1,2\}^{2},
\end{align}
where $\,B\,$ is the unit ball in ${\mathbb{R}}^{n},\,$
and compare the asymptotic behaviour as $\,N\rightarrow +\infty\,$ of their operator norms for all
$\,p,\,q\in [1,+\infty].\,$
Let $\,C_{j,k,n}:[0,1]^{2}\rightarrow \mathbb{R}\,$
be the functions defined by
\begin{align*}
C_{j,k,n}\left(\frac{1}{p},\frac{1}{q}\right)\,:=\,
\alpha \quad \text{if}\qquad
\parallel T^{j,k}_{N}\parallel_{L^{p}\left(B\right)
\rightarrow L^{q}([0,1])}
\;\approx\; N^{ - \alpha}.
\end{align*}
We discover that $\,C_{j,k,n}\,$ is a continuous function
with range $\,[0,{1}/{4}]\,$
when $n=1,$ $j=2$ and  $\,[0,{1}/{2}{k}]\,$
 otherwise (see the figure below).
We actually prove that
\begin{thm}\label{mainthm}
\begin{equation*} C_{j,k,n}\left(\frac{1}{p},\frac{1}{q}\right)\;=\;
\left\{
  \begin{array}{ll}
\frac{1}{4}\, \sigma\left(\frac{1}{p},\frac{1}{q}\right), & \hbox{$n=1,\;$ $j=2$;} \\\\
\frac{1}{2\,k}\, \sigma\left(\frac{1}{p},\frac{1}{q}\right), & \hbox{
$n\geq j$.}
  \end{array}
\right.
\end{equation*}
where
\begin{equation}\label{sgmab}
 \sigma(a,b):=\left\{
                  \begin{array}{ll}
                   2b , & \hbox{$\; 0\leq a \leq 1-b,\;\;
0\leq b \leq \frac{1}{2}$;} \\
                   2(1-a) , & \hbox{$\; \frac{1}{2}\leq a \leq 1,\;\;a+ b \geq 1$;} \\
                   1 , & \hbox{$\;0\leq a \leq \frac{1}{2},\;\;\frac{1}{2}\leq b \leq 1$.}
                  \end{array}
                \right.
\end{equation}
 \end{thm}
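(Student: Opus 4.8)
The plan is to reduce the problem to a one–parameter family of one–dimensional weighted oscillatory integral operators, establish a handful of endpoint bounds, interpolate to cover the whole square $[0,1]^2$, and finally saturate the bounds with explicit extremisers.

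\emph{Reduction.} Since the kernel $e^{\imath N|x|^{j}s^{k}}$ depends on $x$ only through $|x|$, replacing $f$ by its spherical average on each sphere does not increase $\|f\|_{L^{p}(B)}$ and leaves $T^{j,k}_{N}f$ unchanged; so the operator norm is attained on radial $f$, $f(x)=F(|x|)$, and $T^{j,k}_{N}f(s)=\omega_{n-1}\int_{0}^{1}F(r)e^{\imath Nr^{j}s^{k}}r^{n-1}\,dr$. The substitution $t=r^{j}$ turns this into $c_{n,j}\int_{0}^{1}G(t)e^{\imath Nts^{k}}t^{\frac{n}{j}-1}\,dt$ with $G(t)=F(t^{1/j})$, while $\|f\|_{L^{p}(B)}$ becomes a weighted norm of $G$ with weight $t^{\frac{n}{j}-1}$. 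That weight is bounded on $[0,1]$ precisely when $n\ge j$, and equals the singular weight $t^{-1/2}$ exactly in the exceptional case $(n,j)=(1,2)$; this dichotomy is the source of the two formulae in the theorem. Everything therefore reduces to estimating the model operator $\mathcal S^{(k)}_{N}g(s)=\int_{0}^{1}g(t)e^{\imath Nts^{k}}w(t)\,dt$ with $w(t)=t^{\frac{n}{j}-1}$, between the corresponding weighted $L^{p}$ and $L^{q}$ spaces on $[0,1]$.

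\emph{Endpoint bounds and interpolation.} The only endpoint requiring cancellation is $p=q=2$. Expanding $\|T^{j,k}_{N}f\|_{L^{2}([0,1])}^{2}$ and integrating in $s$ first produces the kernel $\int_{0}^{1}e^{\imath N(|x|^{j}-|y|^{j})s^{k}}\,ds$, whose modulus is $O\bigl(\min(1,(N\,|\,|x|^{j}-|y|^{j}\,|)^{-1/k})\bigr)$: exact evaluation of the geometric integral when $k=1$, and van der Corput's lemma when $k=2$. A Schur test in the radial variable then gives $\|T^{j,k}_{N}\|_{L^{2}\to L^{2}}\lesssim N^{-1/(2k)}$ when $n\ge j$ (the radial weight $r^{n-1}$ being bounded), and $\lesssim N^{-1/4}$ when $(n,j)=(1,2)$, because the worst radial contribution — a neighbourhood of the origin — is then $\int_{0}^{1}(Nt)^{-1/k}t^{-1/2}\,dt\approx N^{-\min(1/k,1/2)}=N^{-1/2}$. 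The remaining endpoints are trivial: $\|T^{j,k}_{N}f\|_{L^{\infty}}\le\|f\|_{L^{1}(B)}$ and $\|T^{j,k}_{N}f\|_{L^{1}([0,1])}\le\|f\|_{L^{1}(B)}$, both with no gain in $N$. In region $3$ of $\sigma$ no interpolation is needed: there $p\ge2$ and $q\le2$, and since $[0,1]$ is a probability space and $B$ has finite measure one gets $\|T^{j,k}_{N}f\|_{L^{q}}\le\|T^{j,k}_{N}f\|_{L^{2}}\lesssim N^{-C}\|f\|_{L^{2}}\lesssim N^{-C}\|f\|_{L^{p}}$ with $C$ the $L^{2}\to L^{2}$ rate. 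Regions $1$ and $2$ are filled by Riesz--Thorin interpolation from the vertices $(1/p,1/q)\in\{(0,0),(1,0),(1/2,1/2)\}$ and $\{(1/2,1/2),(1,0),(1,1)\}$ respectively, where the relevant estimates are the trivial ones together with $L^{2}\to L^{2}$; since $\sigma$ is affine on each such triangle and $C_{j,k,n}$ is the corresponding affine interpolant of the vertex rates, this yields $\|T^{j,k}_{N}\|\lesssim N^{-C_{j,k,n}(1/p,1/q)}$ throughout $[0,1]^2$.

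\emph{Sharpness.} The matching lower bounds come from an explicit family of test functions, optimised region by region. (i) The constant $f\equiv1$: one computes $\|T^{j,k}_{N}1\|_{L^{q}([0,1])}\approx N^{-1/(kq)}$ when $n\ge j$, and $\approx N^{-\min(1/2,\,1/(kq))}$ when $(n,j)=(1,2)$ (the difference coming again from the behaviour of $\int_{0}^{1}r^{n-1}e^{\imath Mr^{j}}\,dr$, which is $O(1/M)$ for $n\ge j$ but $O(M^{-1/2})$ for $(n,j)=(1,2)$); this is sharp on the bulk of regions $1$ and $3$. (ii) The chirp $f(x)=e^{-\imath N|x|^{j+1}}$ (normalised): by Lemma~\ref{stationary}, for $s$ in a fixed subinterval of $(0,1)$ the phase $|x|^{j}s^{k}-|x|^{j+1}$ has a non-degenerate critical point in the radius, whence $|T^{j,k}_{N}f(s)|\approx N^{-1/2}$ there, so $\|T^{j,k}_{N}f\|_{L^{q}}\gtrsim N^{-1/2}$ for every $q$ while $\|f\|_{L^{p}(B)}\approx1$; this gives the sharp lower bound in region $3$ for $k=1$. (iii) The rescaled bumps $f=\mathbf 1_{B(\delta)}$, and in the exceptional case the weighted analogue $f(x)=|x|\,\mathbf 1_{B(\delta)}$ (which is a box in the variable $t=|x|^{2}$, reducing via $x=\delta y$ to $T^{j,k}_{N\delta^{j}}$ on the unit ball): optimising over $\delta$ produces the rate $N^{-1/4}$ needed in region $3$ of the exceptional case, and covers the remaining edges and corners. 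Taking the largest of these lower bounds at each point matches $N^{-C_{j,k,n}(1/p,1/q)}$, which together with the upper bounds proves the theorem.

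\emph{Main obstacle.} The delicate step is the sharpness in region $3$. There $f\equiv1$ already decays too fast for small $q$, and the chirp in (ii) must be replaced, when $k=2$, by a superposition of chirps adapted to dyadic scales of $s$: one needs a test function whose image is simultaneously of size $\approx N^{-1/(2k)}$ and spread over a set of $s$ of measure $\approx1$ (or concentrated near $s=0$, exploiting the degeneracy of the phase in $s$ there), and in the exceptional case the analogous statement with the degeneracy of the phase at $x=0$. Producing such an $f$, and verifying that no rearrangement improves the Schur bound, is exactly where the geometry of the problem — and the contrast between linear and quadratic phases — enters, and is the part I expect to require the most care.
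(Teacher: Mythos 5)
Your proposal has the same overall shape as the paper's: upper bounds by interpolating and H\"oldering a single $L^{2}\to L^{2}$ energy estimate, and lower bounds from a focusing bump, a constant, and a chirp. Your chirp $e^{-\imath N|x|^{j+1}}$ plays exactly the role of the paper's $e^{2\imath N(|x|^{2}-|x|)}$ (an interior non-degenerate stationary point in the radial variable, giving $|T^{j,k}_{N}h(s)|\approx N^{-1/2}$). Two technical points in the upper bound need more care than your sketch gives: a Schur test on the bare kernel bound $\min\bigl(1,(N|t-u|)^{-1/k}\bigr)$ loses a factor $\log N$ when $k=1$, which the paper avoids by inserting a smooth cutoff so that $|K^{j,1}_{N}|\lesssim (1+N^{2}(\cdot)^{2})^{-1}$ (Lemma~\ref{kernelsk}); and in the exceptional case $(n,j)=(1,2)$ the Schur sum $\int_{0}^{1}(Nt)^{-1/k}t^{-1/2}\,dt$ you quote diverges at $t=0$ as written, and after the cutoff at $t\sim 1/N$ it still produces a $\log N$ when $k=2$, which the paper removes by the Hilbert-type homogeneous-kernel bound of Lemma~\ref{homogeneous} via Lemma~\ref{even}.

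The substantive gap is the one you flag under ``Main obstacle,'' and it is wider than you indicate. The three test-function rates, $\frac{n}{j}\bigl(1-\frac1p\bigr)$, $\frac{1}{kq}$, and $\frac12$, all \emph{strictly} exceed the target $\frac{1}{k}\bigl(1-\frac1p\bigr)$ throughout the interior of region~2 whenever $nk>j$, i.e.\ in every case except $n=j$, $k=1$; the mismatch is not confined to region~3. And it cannot be repaired by a cleverer extremiser. For $n=j=1$, $k=2$, at $(1/p,1/q)=(3/4,1/2)$ the theorem asserts $\|T^{1,2}_{N}\|_{L^{4/3}\to L^{2}}\approx N^{-1/8}$, yet writing $\|T^{1,2}_{N}f\|^{2}_{L^{2}([0,1])}=\tfrac12\int_{0}^{1}|g(u)|^{2}u^{-1/2}\,du$ with $g(u)=\int_{0}^{1}\bigl(f(x)+f(-x)\bigr)e^{\imath Nxu}\,dx$, bounding $\int_{0}^{1/N}$ by $\|g\|_{\infty}^{2}N^{-1/2}$ and $\int_{1/N}^{1}$ by Cauchy--Schwarz together with the Hausdorff--Young estimate $\|g\|_{L^{4}([0,1])}\lesssim N^{-1/4}\|f\|_{L^{4/3}}$, gives $\|T^{1,2}_{N}\|_{L^{4/3}\to L^{2}}\lesssim N^{-1/4}(\log N)^{1/4}$, which is \emph{strictly} stronger than $N^{-1/8}$. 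So the interpolated bound of Lemma~\ref{holder} is not sharp at that point, the identity ``$\min\{\cdots\}=C_{j,k,n}$'' asserted at the end of Step~1 fails there, and the exponent announced in Theorem~\ref{mainthm} in region~2 with $k=2$ has to be revisited before any extremiser construction, yours or the paper's, can succeed.
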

\begin{align*}
 \begin{tikzpicture} [scale=9]
     \draw[->] (0.0, 0) -- (0.6, 0) node[below] {$\frac{1}{p}$};
    \draw[->] (0,0.0) -- (0, 0.6)  node[left] {$\frac{1}{q}$};
    \draw (0.5, 0) node[below] {${1}$};
    \draw (0, 0.5) node[left] {${1}$};
        \draw (0.25, 0) node[below] {$\frac{1}{2}$};
    \draw (0, 0.25) node[left] {$\frac{1}{2}$};
    \draw
    (0, 0) -- (0.5, 0.0) -- (0.5, 0.5) -- (0.0, 0.5) -- cycle;
    \draw
    (0.5, 0.0) -- (0.25, 0.25)-- (0.25, 0.5);
   \draw (0.25, 0.25) -- (0.0, 0.25);
\draw [loosely dotted] (0.25, 0.0) -- (0.25, 0.25);
\draw (0.165,0.125) node{$\;\frac{1}{2}\frac{1}{q}$};
\draw (0.125,0.375) node{$\;\frac{1}{4}$};
\draw (0.375,0.3) node{$\;
\frac{1}{2}(1-\frac{1}{p})$};
\draw (0.3, -0.1) node[below] {$C_{2,k,1}$};
\end{tikzpicture}\qquad\qquad\quad
 \begin{tikzpicture} [scale=9]
     \draw[->] (0.0, 0) -- (0.6, 0) node[below] {$\frac{1}{p}$};
    \draw[->] (0,0.0) -- (0, 0.6)  node[left] {$\frac{1}{q}$};
    \draw (0.5, 0) node[below] {${1}$};
    \draw (0, 0.5) node[left] {${1}$};
        \draw (0.25, 0) node[below] {$\frac{1}{2}$};
    \draw (0, 0.25) node[left] {$\frac{1}{2}$};
    \draw
    (0, 0) -- (0.5, 0.0) -- (0.5, 0.5) -- (0.0, 0.5) -- cycle;
    \draw
    (0.5, 0.0) -- (0.25, 0.25)-- (0.25, 0.5);
   \draw (0.25, 0.25) -- (0.0, 0.25);
\draw [loosely dotted] (0.25, 0.0) -- (0.25, 0.25);
\draw (0.165,0.125) node{$\;\frac{1}{k}\frac{1}{q}$};
\draw (0.125,0.375) node{$\;\frac{1}{2}\frac{1}{k}$};
\draw (0.375,0.3) node{$\;
\frac{1}{k}(1-\frac{1}{p})$};
\draw (0.3, -0.1) node[below] {$C_{j,k,n}$};
\end{tikzpicture}
\end{align*}
\begin{rem}
For each $\,p,q\in [1,\infty],$
and all dimension $\,n>1,\,$
the asymptotic behaviour of
$\,\parallel T^{j,k}_{N}\parallel_{L^{p}\left(B\right)
\rightarrow L^{q}([0,1])}\,$
as $\,n\rightarrow +\infty\,$
is determined only by the linearity or quadraticity of the phase in $s$. The role
of the power $j$ of $x$
appears exclusively in the dimension $n=1.$
\end{rem}
\begin{rem}
There is nothing special about
neither the unit interval nor the unit ball
in defining the operators $T^{j,k}_{N}$.
Actually we shall make use of H\"{o}lder
inclusions of $L^{p}$ spaces
on measurable sets of finite measure
(see Lemma \ref{holder} below).
So we may take any suitable two such
sets provided their finite measures are
asymptotically equivalent to a constant
independent of $ N $ as $ N\rightarrow +\infty.$
\end{rem}
Foschi \cite{damianorem} studied
a discrete version of an operator a little simpler than the integral operator $\,T^{1,1}_{N}.\,$ He considered the operator $\,D_{N}:\ell^{p}(\mathbb{C}^{N})
\rightarrow L^{q}(-\pi,\pi)\,$
that assigns to each vector
$\,a=(a_{0},a_{1},...a_{N-1})\in {\mathbb{C}}^{N}\,$ the $\,2\pi$-periodic trigonometric polynomial $\,D_{N}a(t)=\sum_{m=0}^{N-1}a_{m}\,e^{\imath\,m\,t}\,$
and described the asymptotic behaviour
of $\, \displaystyle\sup_{a\in \mathbb{C}^{N}-\{0\}}
{{\parallel D_{N}a\parallel_{L^{q}([-\pi,\pi])} }/
{\parallel a\parallel_{\ell^{p}\left(\mathbb{C}^{N} \right)}}}$ as $N\rightarrow+\infty,$
 for all $ 1\leq p,\,q\leq+\infty.$
The norms there are defined by
\begin{align*}
&\parallel a \parallel_{\ell^{p}}=
\left( \sum_{m=0}^{N-1}|a_{m}|^{p}\right)^{\frac{1}{p}},
\quad
1\leq p <\infty, \qquad
\parallel a \parallel_{\ell^{\infty}}=
\max_{0\leq m\leq N-1}|a_{m}|,\\
&
 \parallel f \parallel_{L^{q}}=
\left(\frac{1}{2\pi} \int_{-\pi}^{\pi}|f(t)|^{q}dt\right)^{\frac{1}{q}},
\quad
1\leq q <\infty, \qquad
\parallel f \parallel_{L^{\infty}}=
\max_{|t|\leq \pi}|f(t)|.
\end{align*}
This was followed by
a similar investigation (see Section 5 in \cite{damianorem}) of a linear
integral operator with an oscillatory kernel
 $\, L_{N}: L^{p}([0,1])\rightarrow L^{q}([0,1])\,$
defined by
\begin{align*}
L_{N}f(t)\,:=\,\int_{0}^{1}\,
e^{\imath N/(1+t+s)}\,\frac{f(s)}{(1+t+s)^{\gamma}}\,ds,
\quad\text{\small for some fixed}\;\;  \gamma \geq 0.
\end{align*}
\subsubsection*{2.  Proof of Theorem \ref{mainthm}}
In order to show Theorem \ref{mainthm}, we
shall go through the following steps.\\
\textbf{\emph{\underline{Step 1}}}.\hspace{0.1 cm}
Find lower bounds
for $\,\parallel T^{j,k}_{N} \parallel_
{L^{p}(B)\rightarrow L^{q}([0,1])}\,$
for all $\,p,q\in [1,+\infty]\,$:\\
Test the ratio $ \parallel T^{j,k}_{N}f \parallel_{L^{q}([0,1])}/
\parallel f \parallel_{L^{p}(B)} $ for
functions $f \in {L^{p}(B)}$
that kill or at least slow down the oscillations in the integrals $T^{j,k}_{N}f.\,$ Of course this ratio is majorized by $\displaystyle  \parallel T^{j,k}_{N} \parallel_
{L^{p}(B)\rightarrow L^{q}([0,1])}=\sup_{f\in L^{p}(B)-\{0\}}
{{\parallel T^{j,k}_{N}f\parallel_{L^{q}([0,1])} }/{\parallel f\parallel_{L^{p}\left(B\right)}}}. $
But what is really interesting is the fact that such functions likely maximize the ratio as well.\\
\textbf{\emph{\underline{Step 2}}}.\hspace{0.1 cm}
We find upper bounds
for $\,\parallel T^{j,k}_{N} \parallel_
{L^{p}(B)\rightarrow L^{q}([0,1])}\,$
for all $\,p,q\in [1,+\infty].$
Thanks to interpolation and  H\"{o}lder's inequality,
we merely need an upper bound
for $\parallel T^{j,k}_{N} \parallel_
{L^{2}(B)\rightarrow L^{2}([0,1]).}$
\begin{lem}\label{holder}
Let $\,T^{j,k}_{N}:L^{p}(B)\,
\rightarrow\,L^{q}([0,1])\,$ be as in (\ref{intop}). Assume that
\begin{align}\label{en11}
 \parallel T^{j,k}_{N}f \parallel_{L^{2}([0,1])}
\,\leq\, c_{j,k,N}
\parallel f \parallel_{L^{2}(B)}.
\end{align}
Then
\begin{align}\label{consigma}
 \parallel T^{j,k}_{N} \parallel_{L^{p}(B)
\rightarrow L^{q}([0,1])}
\;\lesssim_{p,q,n}\; c^{\sigma\left(\frac{1}{p},\frac{1}{q}
\right)}_{j,k,N}
\end{align}
where $\,\sigma:[0,1]^{2}\rightarrow [0,1]\,$
is the continuous function
in (\ref{sgmab}).
\end{lem}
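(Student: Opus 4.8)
The plan is to derive (\ref{consigma}) from only two facts about $T^{j,k}_{N}$: the trivial $L^{1}(B)\to L^{\infty}([0,1])$ bound and the hypothesised $L^{2}(B)\to L^{2}([0,1])$ bound (\ref{en11}). Everything beyond that is the Riesz--Thorin interpolation theorem together with the H\"older inclusions of Lebesgue spaces on sets of finite measure.

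It helps to first rewrite $\sigma$ in the closed form
\begin{align*}
\sigma(1/p,1/q)\;=\;2\min\{\,1-1/p,\ 1/q,\ 1/2\,\}.
\end{align*}
Writing $a=1/p$ and $b=1/q$, one checks this against the three cases of (\ref{sgmab}): the minimum equals $b$ exactly on $\{\,b\leq1/2,\ a\leq1-b\,\}$, equals $1-a$ exactly on $\{\,a\geq1/2,\ a+b\geq1\,\}$, and equals $1/2$ exactly on $\{\,a\leq1/2,\ b\geq1/2\,\}$. These three regions exhaust $[0,1]^{2}$ and the three expressions agree on overlaps, which also makes the continuity of $\sigma$ and its range $[0,1]$ transparent.

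Next I would record the two ingredients. (a) Because $|e^{\imath N|x|^{j}s^{k}}|=1$ we have $\parallel T^{j,k}_{N}f\parallel_{L^{\infty}([0,1])}\leq\parallel f\parallel_{L^{1}(B)}$, so $T^{j,k}_{N}:L^{1}(B)\to L^{\infty}([0,1])$ has norm at most $1$, while (\ref{en11}) says $T^{j,k}_{N}:L^{2}(B)\to L^{2}([0,1])$ has norm at most $c_{j,k,N}$; interpolating the exponent pairs $(1,\infty)$ and $(2,2)$ by Riesz--Thorin gives, for every $t\in[1/2,1]$,
\begin{align*}
\parallel T^{j,k}_{N}\parallel_{L^{1/t}(B)\to L^{1/(1-t)}([0,1])}\;\leq\;1^{2t-1}\,c_{j,k,N}^{2(1-t)}\;=\;c_{j,k,N}^{2(1-t)}.
\end{align*}
(b) On a space of total measure $M<\infty$, H\"older's inequality gives $\parallel g\parallel_{L^{r_{0}}}\leq M^{1/r_{0}-1/r_{1}}\parallel g\parallel_{L^{r_{1}}}$ whenever $r_{1}\geq r_{0}$; applying this with $M=1$ on $[0,1]$ and with $M=|B|$, a finite $N$-independent constant, on $B$, we obtain the monotonicity
\begin{align*}
\parallel T^{j,k}_{N}\parallel_{L^{p}(B)\to L^{q}([0,1])}\;\lesssim_{n}\;\parallel T^{j,k}_{N}\parallel_{L^{p_{1}}(B)\to L^{q_{1}}([0,1])}\quad\text{whenever}\quad 1/p_{1}\geq1/p\ \text{ and }\ 1/q_{1}\leq1/q.
\end{align*}

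Finally I would combine (a) and (b): given $p,q\in[1,\infty]$, set $t_{\ast}=\max\{\,1/p,\ 1-1/q,\ 1/2\,\}\in[1/2,1]$ and apply the monotonicity of (b) with $(p_{1},q_{1})=(1/t_{\ast},\,1/(1-t_{\ast}))$, which is legitimate precisely because $t_{\ast}\geq1/p$ and $1-t_{\ast}\leq1/q$. Combined with (a) this gives
\begin{align*}
\parallel T^{j,k}_{N}\parallel_{L^{p}(B)\to L^{q}([0,1])}\;\lesssim_{n}\;c_{j,k,N}^{2(1-t_{\ast})}\;=\;c_{j,k,N}^{2\min\{1-1/p,\,1/q,\,1/2\}}\;=\;c_{j,k,N}^{\sigma(1/p,1/q)},
\end{align*}
which is (\ref{consigma}). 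There is no real analytic obstacle: all the substance sits in the $L^{2}\to L^{2}$ hypothesis and the classical interpolation theorem. The only thing demanding care is the bookkeeping — verifying the closed form of $\sigma$ against (\ref{sgmab}) case by case, and checking that the Riesz--Thorin exponent pair attached to $t_{\ast}$, namely $(1/t_{\ast},1/(1-t_{\ast}))$, has first coordinate $\geq1/p$ and second coordinate $\leq1/q$, as (b) requires.
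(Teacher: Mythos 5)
Your proposal is correct and uses the same three ingredients as the paper: the trivial $L^{1}(B)\to L^{\infty}([0,1])$ bound, Riesz--Thorin interpolation with the $L^{2}\to L^{2}$ hypothesis (\ref{en11}), and H\"older inclusions on the finite-measure sets $B$ and $[0,1]$. The paper arrives at (\ref{consigma}) by treating the three regions of (\ref{sgmab}) one at a time in (\ref{int1})--(\ref{int4}), whereas you collapse the bookkeeping into the single closed form $\sigma(1/p,1/q)=2\min\{1-1/p,\,1/q,\,1/2\}$ and one choice of $t_{\ast}$, which is a cleaner presentation of the same argument.
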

\begin{proof}
If we take
 absolute values of both sides of (\ref{intop}) we get
the trivial estimate \\
$\;\parallel T^{j,k}_{N}f\parallel_{L^{\infty}([0,1])}
\,\leq\,\parallel f\parallel_{L^{1}\left(B\right)}.$
Interpolating this with (\ref{en11})
using Riesz-Thorin theorem (\cite{loukas})
implies
\begin{align}\label{int1}
 \parallel T^{j,k}_{N}f \parallel_{L^{q}([0,1])}
\,\leq\, c^{2\left(1-\frac{1}{p}\right)}_{j,k,N} \parallel f \parallel_{L^{p}(B)},
\qquad \frac{1}{2}\leq\frac{1}{p}\leq 1,\;\;
\frac{1}{q}=1-\frac{1}{p}.
\end{align}
Since, by H\"{o}lder's inequality,
$\; \parallel T^{j,k}_{N}f \parallel_{L^{\bar{q}}([0,1])}
\,\leq\,\parallel T^{j,k}_{N}f \parallel_{L^{q}([0,1])}\;$ whenever \\$\,1\leq \bar{q}\leq q\leq \infty,$ then
\begin{align}\label{int2}
 \parallel T^{j,k}_{N}f \parallel_{L^{q}([0,1])}
\,\leq\, c^{2\left(1-\frac{1}{p}\right)}_{j,k,N} \parallel f \parallel_{L^{p}(B)},
\qquad \frac{1}{2}\leq\frac{1}{p}\leq 1,\;\;
1-\frac{1}{p}\leq\frac{1}{q}\leq 1.
\end{align}
Applying H\"{o}lder's inequality once more
we find that if $\;1\leq p\leq\bar{p}\leq \infty,\,$ then
\begin{align}
\nonumber&\hspace{-1 cm}\parallel f \parallel_{L^{p}(B)}
\,\leq\,|B|^{\frac{1}{p}-\frac{1}{\bar{p}}}\,
\parallel f \parallel_{L^{\bar{p}}(B)}.
\;\; \text{Therefore by}\;(\ref{int1})\;
\text{we have}\\
\label{int3}
&\hspace{-0.6 cm}\parallel T^{j,k}_{N}f \parallel_{L^{q}([0,1])}
\,\leq\, |B|^{1-\frac{1}{p}-\frac{1}{q}}\,
c^{2/q}_{j,k,N} \parallel f \parallel_{L^{p}(B)},
\quad 0\leq\frac{1}{q}\leq \frac{1}{2},\;\;
0\leq\frac{1}{p}\leq 1-\frac{1}{q}.
\end{align}
Moreover, since we know from (\ref{int2})
that
\begin{align}
\nonumber &\hspace*{-1 cm}\parallel T^{j,k}_{N}f \parallel_{L^{q}([0,1])}
\,\leq\, c_{j,k,N} \parallel f \parallel_{L^{2}(B)},
\quad \frac{1}{2}\leq\frac{1}{q}\leq 1,\quad
\text{then}\\
&\label{int4}
\parallel T^{j,k}_{N}f \parallel_{L^{q}([0,1])}
\,\leq\, |B|^{\frac{1}{2}-\frac{1}{p}}\,
c_{j,k,N} \parallel f \parallel_{L^{p}(B)},
\quad 0\leq\frac{1}{p}\leq \frac{1}{2},\;\;
\frac{1}{2}\leq\frac{1}{q}\leq 1.
\end{align}\vspace{-0.5 cm}
\end{proof}
If the constants in inequalities (\ref{int1}) - (\ref{int4}) were sharp, they would be precisely the values of the corresponding norms
$\,\parallel T^{j,k}_{N}\parallel_{L^{p}\left(B\right)
\rightarrow L^{q}([0,1])}.$
Unfortunately, we are not able to
compute the optimal constant $\,c_{j,k,N}\,$  in the energy estimate (\ref{en11}).
Nevertheless, the constants $\,c^{\sigma\left(\frac{1}{p},\frac{1}{q}
\right)}_{j,k,N}\,$ in (\ref{consigma})
 would be good enough
for our purpose if,
for each $p,q\in [1,+\infty],$ they were asymptotically equivalent, as $ N\rightarrow +\infty$, to the corresponding lower bounds
of $\,\parallel T^{j,k}_{N}\parallel_{L^{p}\left(B\right)
\rightarrow L^{q}([0,1])}\,$ that we compute in \emph{Step 1}.\\
\textbf{\emph{\underline{Step 1}}. }\\
(i) \textbf{Focusing data}\\
When $\,x\in B(\eta /N^{\frac{1}{j}})\,$ we have
$\;\displaystyle e^{\imath N{|x|}^{j}s^{k}}=
e^{\mathcal{O}\left(\eta\right)}=
1+\mathcal{O}\left(\eta\right), \;\;
\text{\small for all}\;s\in [0,1].$
Thus, if we
take $f_{j}$ to be the focusing functions
$\,f_{j}=\displaystyle \chi_{B(\eta /N^{\frac{1}{j}})}\,$ then $\; \displaystyle \parallel f \parallel_{L^{p}(B)}\,=\,
|B(\eta /N^{\frac{1}{j}})|^{\frac{1}{p}}\;$
and
\begin{align*}
T^{j,k}_{N}f_{j}(s)\,=\,\int_{B(\eta /N^{\frac{1}{j}})}
\,e^{\imath N{|x|}^{j}s^{k}}\,dx=
\int_{B(\eta /N^{\frac{1}{j}})}
\,\left(1+\mathcal{O}\left(\eta\right)\right)\,dx
\,\gtrsim \,|B(\eta /N^{\frac{1}{j}})|
\end{align*}
for all $ \;0\leq s\leq 1.\;$
Consequently, since $\eta$ is fixed,
\begin{align}\label{lb1}
\parallel T^{j,k}_{N}\parallel_{L^{p}\left(B\right)
\rightarrow L^{q}([0,1])}
\,\geq\,\frac{\parallel T^{j,k}_{N}f_{j} \parallel_{L^{q}([0,1])}}{
\parallel f_{j} \parallel_{L^{p}(B)}}
\;\gtrsim\;
N^{-\frac{n}{j}\left(1-\frac{1}{p}\right)}.
\end{align}
The figure below illustrates the one dimensional case.
\begin{align*}
&\hspace{-1 cm} \begin{tikzpicture} [scale=10]
\fill[fill = black!10] (0,0.35)--(0.048,0.35)--(0.048,0.0)--
(0,0)--cycle;
     \draw[->] (0, 0) -- (0.38, 0) node[below] {\small$x$};
    \draw[->] (0,0.0) -- (0, 0.4)  node[left] {\small$ f_{j}(x)$};
        \draw (0.35, 0) node[below] {\small ${1}$};
        \draw (0,0.35) node[left] {\small ${1}$};
\draw[thick] (0,0.35)--(0.05,0.35);
\draw[thick] (0.05,0.0001)--(0.35,0.0001);
\draw (0.07,0.0)node[below] { \footnotesize {$N^{-\frac{1}{j}}$}};
 \draw[dotted] (0.35, 0) -- (0.35,0.35)--(0,0.35);
\end{tikzpicture}\quad
  \begin{tikzpicture} [scale=3.5]
\draw[->] (0.0, 0) -- (1.1, 0) node[below] {\small $s$};
\draw[->] (0.0,0.0) -- (0.0,1.1);
 \draw (0.9,1.2) node[left] { \footnotesize  {$N^{1/j}$\text{Re}$\left\{T^{j,k}_{N}f_{j}(s)\right\}$}};
\draw (1,0) node[below] {\small${1}$};
\draw (1,-0.001) --(1,0.003);
\draw (0.0,0.95) node[left] {\small${1}$};
\draw [blue,samples=500,domain=0.0:1] plot
(\x, {cos(0.65*\x r)});
\draw [red,samples=500,domain=0.0:1] plot
(\x, {cos(0.65*\x*\x r)});
 \draw(0.7,1) node[right,thick] {\tiny {$k=2$}};
 \draw(0.6,0.8) node[right,thick] {\tiny {$k=1$}};
 \draw(0.0,-0.15) node {};
\end{tikzpicture}\qquad
  \begin{tikzpicture} [scale=3.5]
\draw[->] (0.0, 0) -- (1.1, 0) node[below] {\small $s$};
\draw[->] (0.0,0.0) -- (0.0,1.1);
 \draw (0.9,1.2) node[left] { \footnotesize  {$N^{1/j}$\text{Im}$\left\{T^{j,k}_{N}f_{j}(s)\right\}$}};
\draw (1,0) node[below] {\small${1}$};
\draw (1,-0.001) --(1,0.003);
\draw (-0.001,1) --(0.003,1);
\draw (0.0,0.95) node[left] {\small${1}$};
\draw [blue,samples=500,domain=0.0:1] plot
(\x, {sin(0.65*\x r)});
\draw [red,samples=500,domain=0.0:1] plot
(\x, {sin(0.65*\x*\x r)});
 \draw(0.7,0.3) node[right,thick] {\tiny {$k=2$}};
 \draw(0.6,0.6) node[right,thick] {\tiny {$k=1$}};
  \draw(0.0,-0.15) node {};
\end{tikzpicture}\\
&\text{\small \emph{Both real and imaginary parts of the functions}}
\;T^{1,k}_{N}f_{1}\; \text{\small\emph{and}}\;
T^{2,k}_{N}f_{2}\; \text{\small \emph{have the same profile}}.
\end{align*}
(ii) \textbf{Constant data}\\
Let $\,g(x)=1.\,$
Whenever $\,\displaystyle s \in [0,\eta/N^{\frac{1}{k}}]\,$ we have
$\;\imath N{|x|}^{j}s^{k}\,=\,\mathcal{O}\left(\eta\right)\;$
for all $\;x\in B\;$ and it follows that
$\;\displaystyle e^{\imath N{|x|}^{j}s^{k}}=
1+\mathcal{O}\left(\eta\right).\;$
Hence, when $\,\displaystyle s \in [0,\eta/N^{\frac{1}{k}}],\,$
\begin{align*}
T^{j,k}_{N}g(s)\,=\,
\int_{B}\,e^{\imath N{|x|}^{j}s^{k}}\,dx\,=\,
\int_{B}\,\left(1+\mathcal{O}\left(\eta\right)\right)\,dx
\,\gtrsim 1.
\end{align*}
Therefore, recalling that $\eta$ is fixed,
\begin{equation}\label{lb20}
\int_{0}^{1}|T^{j,k}_{N}g(s)|^{q}\,ds
\,\geq\,
\int_{0}^{\eta/N^{\frac{1}{k}}}|T^{j,k}_{N}g(s)|^{q}\,ds
\,\gtrsim\,
\int_{0}^{\eta/N^{\frac{1}{k}}}\,ds
\,\approx\,N^{-\frac{1}{k}}.
\end{equation}
In view of (\ref{lb20}), we deduce that
\begin{align}\label{lb2}
\parallel T^{j,k}_{N}\parallel_{L^{p}\left(B\right)
\rightarrow L^{q}([0,1])}
\,\geq\,\frac{\parallel T^{j,k}_{N} g \parallel_{L^{q}([0,1])}}{
\parallel g \parallel_{L^{p}(B)}}
\;\gtrsim\;
N^{-\frac{1}{k}\frac{1}{q}}.
\end{align}
By rescaling, it is easy to verify that the estimate (\ref{lb2}) follows for any complex-valued constant function $g$. The figure below shows the behaviour of $ T_{N}^{j,k}g $ on $[0,1]$ in the dimension $n=1.$
\begin{align*}
\hspace*{-0.5 cm}
\begin{tikzpicture} [scale=10]
\draw[->] (0.0, 0.0) -- (0.48,0.0) node[below right] {$s$};
\draw[->] (0.0,0.0) -- (0.0,0.52);
\draw(0.25,0.44)  node[above] {\scriptsize $
    \text{Re}\left\{T^{1,k}_{N}g(s)\right\}=2$};
    \draw(0.45,0.44)  node[above] {\large $ \frac{\sin{(Ns^{k})}}{N\,s^{k}}$};
\draw (0.0,0.49) node[left] {\scriptsize ${2}$};
\draw (0.1,0.2) node[left] {\scriptsize $k=1$};
\draw (0.175,0.3) node[left] {\scriptsize $k=2$};
\draw [very thin,blue,samples=500,domain=0.001:0.45] plot
(\x, {2*sin((250*\x) r)/((1000*\x) )});
\draw [very thin,red,samples=500,domain=0.0012:0.45] plot
(\x, {2*sin((250*\x*\x) r)/((1000*\x*\x) )});
\end{tikzpicture}\qquad\qquad\qquad
\begin{tikzpicture} [scale=10]
\draw(0.23,0.44)  node[above] {\scriptsize $
    \text{Im}\left\{T^{1,k}_{N}g(s)\right\}=4$};
    \draw(0.46,0.44)  node[above] {\large $ \frac{\sin^{2}{(Ns^{k}/2)}}{N\,s^{k}}$};
\draw (0.0,0.49) node[left] {\scriptsize ${2}$};
\draw (0.0,0.49) --(0.001,0.49);
\draw (0.09,0.38) node[left] {\scriptsize $k=1$};
\draw (0.2,0.38) node[left] {\scriptsize $k=2$};
\draw [very thin,red,samples=500,domain=0.001:0.45] plot
(\x, {4*sin((100*\x*\x) r)*sin((100*\x*\x) r)/((800*\x*\x) )});
\draw [very thin,blue,samples=500,domain=0.0001:0.45] plot
(\x, {4*sin((50*\x) r)*sin((50*\x) r)/((400*\x) )});
\draw (-0.084, -0.084)  node[below] {};
\draw[->] (0.0, 0.0) -- (0.48,0.0) node[below right] {$s$};
\draw[->] (0.0,0.0) -- (0.0,0.52);
\end{tikzpicture}
\end{align*}
\begin{align*}
&\hspace*{-1.58 cm}\begin{tabular}{c c}
\hspace{-4.75 cm}\vspace{0.25 cm}   \scriptsize $2$& \\
\hspace{-1.7 cm}   \scriptsize $ k=2$&\\
\hspace{-3.3 cm}   \scriptsize $ k=1$&\\
&\hspace{-1 cm} \scriptsize $ k=1\quad k=2$
\vspace{-2.5 cm}\\
\hspace{0.5 cm}\scriptsize{ \text{Re}$\left\{T^{2,k}_{N}g(s)\right\}$}
&\hspace{2 cm}\scriptsize
{\text{Im}$\left\{T^{2,k}_{N}g(s)
    \right\}$} \vspace{-1.5 cm}\\
\includegraphics[scale=0.32]{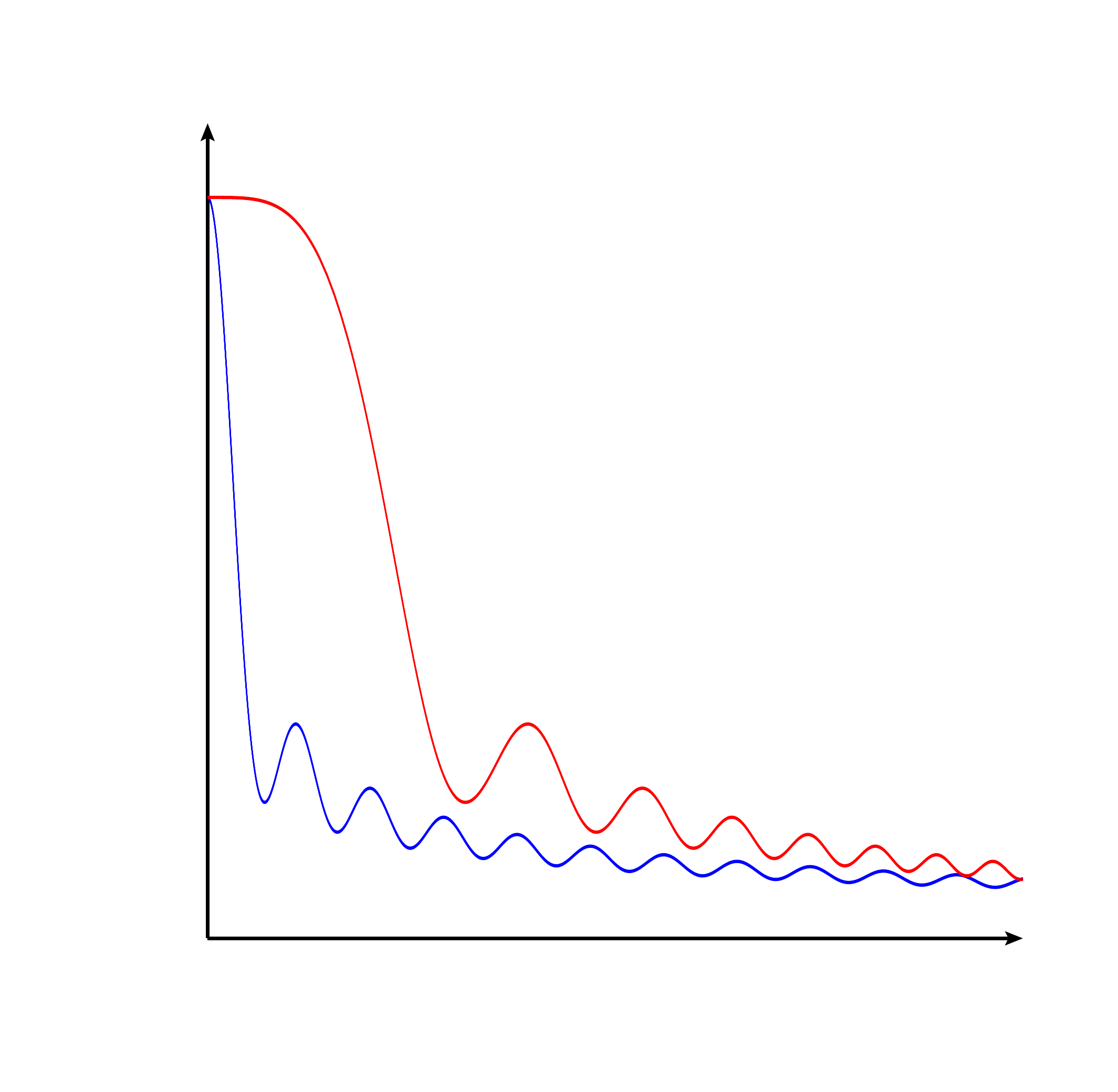}
&\qquad\quad\;
\includegraphics[scale=0.32]{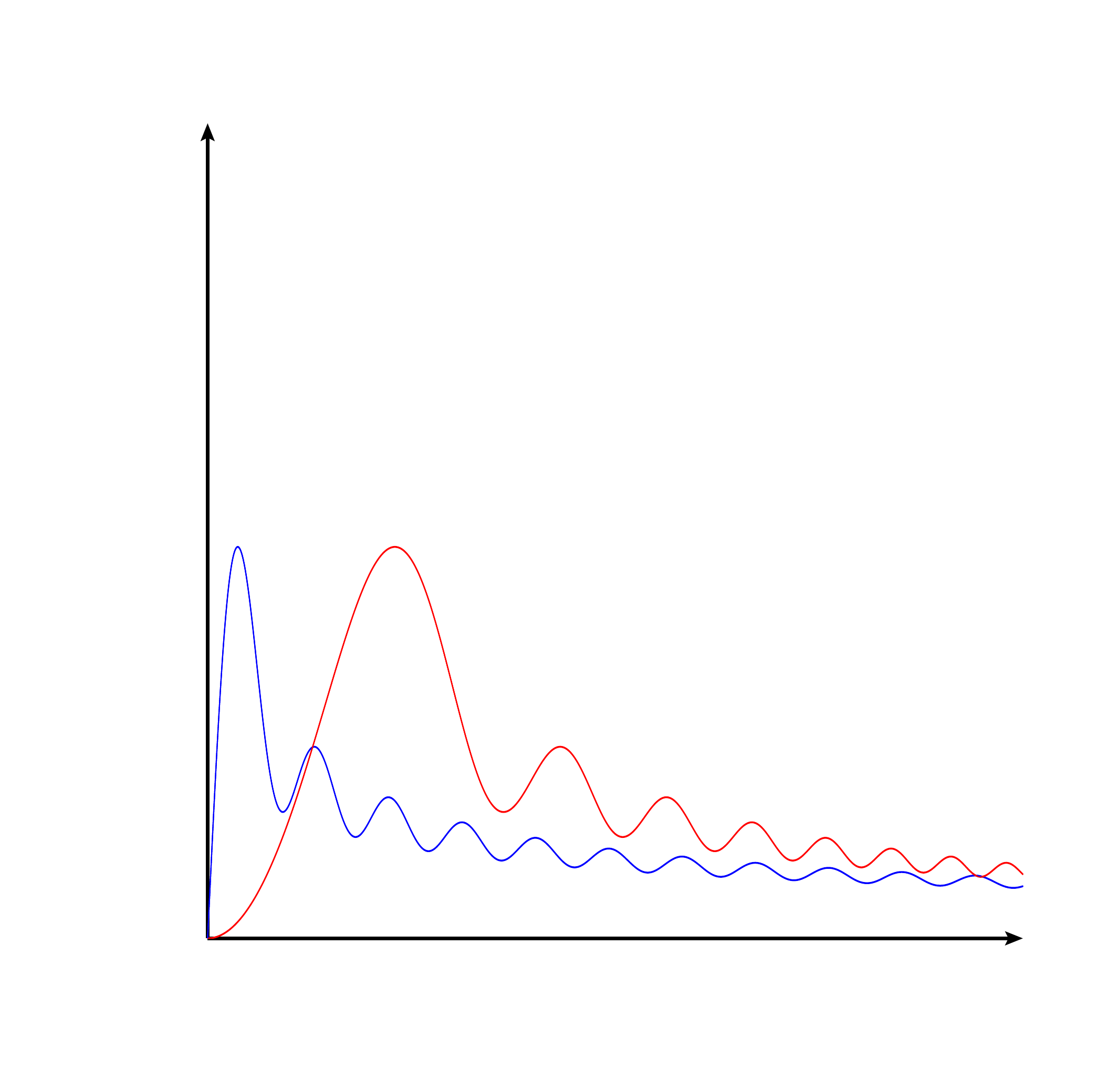}\vspace{-1 cm}\\
  \hspace{5.5 cm} \small$ s$&   \hspace{7 cm}
 \small $ s$
\end{tabular}\\
&\hspace{-2.3 cm} \text{\small\emph{Functions}}\; \text{\small \emph{Re}}\small \{ T^{1,k}_{N}g(s)\}\;
\text{\small\emph{vanish and}}\;\text{\small \emph{Re}}\small \{T^{2,k}_{N}g(s)\}\;
\text{\small \emph{change monotonicity, for the first time, when }}\;s=\sqrt[k]{\pi/N}
\end{align*}
(iii) \textbf{Oscillatory data}\\
Consider the oscillatory function
$\,h(x)=e^{2\imath N \left(|x|^2-|x|\right)}.\,$
Using polar coordinates we can write
\begin{align*}
T^{j,k}_{N}h(s)\,=\,
\int_{S^{n-1}}
\int_{0}^{1}\,e^{\imath N
\,\left({\rho}^{j}s^{k}+2\rho^2-2\rho\right)}\,
\rho^{n-1}\,d\rho \,d\omega\,=\,
\omega_{n-1}
\:I^{j,k}_{N}(s)
\end{align*}
where $I^{j,k}_{N}(s) $ is the oscillatory integral
given by
\begin{align}\label{inoscint}
I^{j,k}_{N}(s) =
\int_{0}^{1}\,e^{\imath N
\,\phi_{j,k}(\rho;s)}\,
\rho^{n-1}\,d\rho
\end{align}
with the phase $\displaystyle \phi_{j,k}(\rho;s) ={\rho}^{j}s^{k}+2\rho^2-2\rho. $\\
The quadratic function $\displaystyle \rho\rightarrow\phi_{j,k}(\rho;s)$, after a suitable translation along the vertical axis,
has a single nondegenerate stationary point
that happens to lie well inside $]\frac{1}{5},\frac{4}{5}[.$ Indeed, one can simply
write
\begin{align*}
\phi_{j,k}(\rho;s)=\left\{
                     \begin{array}{ll}
2\left(\rho-\frac{2-s^{k}}{4}\right)^{2}-
\frac{\left(2-s^{k}\right)^{2}}{8}, & \hbox{$j=1$;} \\
\left(2+s^{k}\right)\left(\rho-\frac{1}{2+s^{k}}\right)^{2}-
\frac{1}{\left(2+s^{k}\right)^{2}}, & \hbox{$j=2$.}
                     \end{array}
                   \right.
\end{align*}
Notice also that $\, \left(2-s^{k}\right)/4\in[\frac{1}{4},\frac{1}{2}]\,$
and $\,\left(2+s^{k}\right)^{-1}\in[\frac{1}{3},
\frac{1}{2}]\,$
when $\,s\in [0,1].$
In fact, this is what we were after when we
used the oscillatory function $h$ with its
particular quadratic phase. Let us see how we benefit from this. We shall work on the integral $\,I^{1,k}_{N}(s)\,$ and the applicability
 of the same procedure to the integral
$\,I^{2,k}_{N}(s)\,$ will be obvious.
For simplicity, let
$z$ denote $\,\left(2-s^{k}\right)/4.\,$
Then
 \begin{align}
\nonumber
e^{2\imath N\,z^2}\,I^{1,k}_{N}(s) =&
\,\int_{0}^{1}\,e^{2\imath N
\,\left(\rho-z\right)^{2}}\,
\rho^{n-1}\,d\rho\\
\label{feq}=&\,z^{n-1}\,\int_{0}^{1}\,e^{2\imath N
\,\left(\rho-z\right)^{2}}\,d\rho+
\int_{0}^{1}\,e^{2\imath N
\,\left(\rho-z\right)^{2}}\,
\left(\rho^{n-1}-z^{n-1}\right)\,d\rho.
\end{align}
We compute
\begin{align}\label{t1}
\hspace{-1 cm}
\int_{0}^{1}\,e^{2\imath N
\,\left(\rho-z\right)^{2}}\,d\rho=
\int_{-\infty}^{+\infty}\,e^{2\imath N
\,\left(\rho-z\right)^{2}}\,d\rho-
\int_{-\infty}^{0}\,e^{2\imath N
\,\left(\rho-z\right)^{2}}\,d\rho-
\int_{1}^{+\infty}\,e^{2\imath N
\,\left(\rho-z\right)^{2}}\,d\rho.
\end{align}
Using the identity (See Exercise 2.26 in \cite{taobook})
\begin{align*}
\int_{-\infty}^{+\infty}
\,e^{-ax^2}\,e^{bx}\,dx=\sqrt{\frac{\pi}{a}}
\,e^{b^2/4a},\quad a,b \in \mathbb{C},\;
 \textrm{Re}(a) >0 \qquad \text{we get}
\end{align*}
\begin{align}\label{11}
\hspace{-0.5 cm}\int_{-\infty}^{+\infty}\,e^{2\imath N
\,\left(\rho-z\right)^{2}}\,d\rho
=\sqrt{\frac{\pi}{2N}}\,e^{\frac{\pi}{4}\imath}.
\end{align}
And since
\begin{align*}
\hspace*{-1 cm}
\left|\;\int_{-\infty}^{0}\,e^{2\imath N
\,\left(\rho-z\right)^{2}}\,\partial_{\rho}
\left(\rho-z\right)^{-1}\,d\rho \right|\,\leq\,
 \frac{1}{z},\quad \left|\;
\int_{1}^{+\infty}\,e^{2\imath N
\,\left(\rho-z\right)^{2}}\,\partial_{\rho}
\left(\rho-z\right)^{-1}\,d\rho \right|\,\leq\,
\frac{1}{1-z},
\end{align*}
then integration by parts implies
\begin{align}
\label{22}&\int_{-\infty}^{0}\,e^{2\imath N
\,\left(\rho-z\right)^{2}}\,d\rho=
\frac{\imath\, e^{2\imath N z^{2}}}{4 N z}
+\mathcal{O}\left(\frac{1}{Nz}\right),\\
\label{33}&\int_{1}^{+\infty}\,e^{2\imath N
\,\left(\rho-z\right)^{2}}\,d\rho=
\frac{\imath\, e^{2\imath N\left(1-z\right)^{2}}}{4 N\left(1-z\right)}
+\mathcal{O}\left(\frac{1}{N\left(1-z\right)}\right).
\end{align}
Recalling that $\,\frac{1}{4}\leq z\leq \frac{1}{2}\;$
and using (\ref{11}), (\ref{22}), (\ref{33})
in (\ref{t1}) we obtain
\begin{align}\label{44}
\int_{0}^{1}\,e^{2\imath N
\,\left(\rho-z\right)^{2}}\,d\rho\,=\,
\sqrt{\frac{\pi}{2N}}\,e^{\frac{\pi}{4}\imath}
+\mathcal{O}\left(\frac{1}{N}\right).
\end{align}
This gives us an estimate for the first integral
on the right hand side of (\ref{feq}).
The second integral is $\;\mathcal{O}\left({1}/{N}\right).\;$ This
follows from integration by parts and the smoothness
of the polynomial $\;P(\rho;z):={\left(\rho^{n-1}-z^{n-1}\right)}/{\left(\rho-z\right)}=
\sum_{\ell=0}^{n-2}\,\rho^{n-2-\ell}\,z^{\ell}\;$
as we can write
\begin{align*}
\int_{0}^{1}\,e^{2\imath N
\,\left(\rho-z\right)^{2}}\,
\left(\rho^{n-1}-z^{n-1}\right)\,d\rho\,=\,
\frac{1}{4\imath N}
\int_{0}^{1}\,
P(\rho;z)\,
\partial_{\rho}\,e^{2\imath N
\,\left(\rho-z\right)^{2}}\,d\rho.
\end{align*}
Plugging (\ref{44}) together with the latter estimate into (\ref{feq}) we get that
\begin{align}\label{sm}
e^{2\imath N\,z^2}\,I^{1,k}_{N}(s)
\,=\,z^{n-1}\,
\sqrt{\frac{\pi}{2N}}\,e^{\frac{\pi}{4}\imath}
+\mathcal{O}\left(\frac{1}{N}\right).
\end{align}
From (\ref{sm}) follows the estimate
\begin{align*}
\left|I^{1,k}_{N}(s)\right|
\,\gtrsim\,N^{-1/2}.
\end{align*}
An explanation for the estimate above comes from the fact that the function
$\,\lambda_{N}(\rho;z)=
\cos{\left(2N\,\left(\rho-z\right)^{2}\right)}\,$
remains positive for $\;|\rho-z|<\sqrt{\left(\pi/4N\right)}\;$
and the further we move from the stationary point
$\rho=z$ it, unlike the slowly varying factor $\rho^{n-1}$, oscillates rapidly for large $N$
so that, when summing over $\rho$, integrals over neighbouring halfwaves where
$\lambda_{N}$  changes sign almost cancel.
See the figure below.
An identical estimate for
$\,I^{2,k}_{N}(s)\,$
follows applying the same argument above.
The approach adopted here is
standard. It represents the key idea
of the proof of the stationary phase method
illustrated by Lemma \ref{stationary}.
\begin{align*}
\begin{tikzpicture}[yscale=1.5]
\fill[fill = black!50] (3*pi/8,0) -- plot [domain=3*pi/8:11*pi/13] (\x,{cos(64*\x*\x )}) -- (11*pi/13,0) -- cycle;
\fill[fill = black!50] (-3*pi/8,0) -- plot [domain=-3*pi/8:-11*pi/13] (\x,{cos(64*\x*\x )}) -- (-11*pi/13,0) -- cycle;
\fill[fill = black!25] (11*pi/13,0) -- plot [domain=11*pi/13:235*pi/208] (\x,{cos(64*\x*\x )}) -- (235*pi/208,0) -- cycle;
\fill[fill = black!25] (-11*pi/13,0) -- plot [domain=-11*pi/13:-235*pi/208] (\x,{cos(64*\x*\x )}) -- (-235*pi/208,0) -- cycle;
\fill[fill = black!5] (235*pi/208,0) -- plot [domain=235*pi/208:141*pi/104] (\x,{cos(64*\x*\x )}) -- (141*pi/104,0) -- cycle;
\fill[fill = black!5] (-235*pi/208,0) -- plot [domain=-235*pi/208:-141*pi/104] (\x,{cos(64*\x*\x )}) -- (-141*pi/104,0) -- cycle;
\draw [ <->] (-6.5,0) -- (6.5,0);
\draw [help lines,dashed,<-] (0,1.3) -- (0,0);
\draw (0,0) node[below] {$\rho=z$};
\draw (-5,1.3) node[above]{$ \cos{\left(N\,\left(\rho-z\right)^{2}\right)}$}; ;
\draw [thick,samples=500,domain=-2*pi:2*pi] plot
(\x, {cos(64*\x*\x )});
\draw [ <-](-3*pi/8+0.01,-0.7)
--(-2*pi/8+0.18,-0.7);
\draw [ ->](2*pi/8-0.1,-0.7)
--(3*pi/8-0.01,-0.7);
\draw (6.5,0) node[right] {$\rho$};
\draw (0,-0.7) node {$\sqrt{{\pi}/{2N}}$};
\draw [help lines,dashed] (-3*pi/8,1) -- (-3*pi/8,-1);
\draw [help lines,dashed] (3*pi/8,1) -- (3*pi/8,0-1);
\end{tikzpicture}
\end{align*}
Finally, since
$\;\displaystyle \parallel h \parallel_{L^{p}(B)}\,=\, |B|^{{1}/{p}}\,\approx\,1,\;$ then
\begin{align}\label{lb3}
\parallel T^{j,k}_{N}\parallel_{L^{p}\left(B\right)
\rightarrow L^{q}([0,1])}
\,\geq\,\frac{\parallel T^{j,k}_{N} h \parallel_{L^{q}([0,1])}}{
\parallel h \parallel_{L^{p}(B)}}
\;\gtrsim\;
N^{-\frac{1}{2}}.
\end{align}
Putting (\ref{lb1}),  (\ref{lb2}) and
(\ref{lb3}) together we deduce
\begin{align*}
 \parallel T^{j,k}_{N} \parallel_{L^{p}(B)
\rightarrow L^{q}([0,1])}
\;\;\gtrsim\;
N^{-\min\left\{\frac{n}{j}\left(1-\frac{1}{p}\right),
\,\frac{1}{k}\frac{1}{q},\,\frac{1}{2}\right\}}
\,=\,N^{- C_{j,k,n}\left(\frac{1}{p},\frac{1}{q}\right)}.
\end{align*}
\textbf{\emph{\underline{Step 2}}.}
The $\,L^{2} - L^{2}\,$ estimate takes the form:
\begin{eqnarray}\label{energy}
\left.
             \begin{array}{ll}
\vspace{0.3 cm}
\parallel T^{j,k}_{N}f\parallel_{L^{2}([0,1])}
\;\lesssim \; N^{-1/2k}\,\parallel f \parallel_{L^{2}\left(B\right)}, & \hbox{$n\geq j$,} \\
               \parallel T^{2,k}_{N}f\parallel_{L^{2}([0,1])}
\;\lesssim \; N^{-n/2j}\,\parallel f \parallel_{L^{2}\left(B\right)}, & \hbox{$n=1$.}
             \end{array}
           \right \}
\end{eqnarray}
 Besides (\ref{lb2}), the estimate (\ref{energy}) demonstrates the difference between linear ($k=1$) and quadratic ($k=2$) oscillations.
Let $\,x\in {\mathbb{R}}^{n}-\{0\}.\,$
The phase $\;s \longrightarrow  {|x|}^{j}\,s^{k}\;$
of the oscillatory factor in (\ref{intop})
is non-stationary when $\,k=1.\,$ While
in the case $\,k=2,\,$
it is stationary with the nondegenerate
critical point $s=0.\,$ This is where
non-stationary and stationary phase methods
(see lemmas \ref{nonstationary} and
\ref{stationary0} below)
for estimating oscillatory integrals come into play.
As expected from (\ref{lb1}), the role of $j$
appears only in the dimension $n=1.$
Using the estimate (\ref{energy}) in Lemma \ref{holder}
we infer
\begin{align*}
 \parallel T^{j,k}_{N} \parallel_{L^{p}(B)
\rightarrow L^{q}([0,1])}
\;\;\lesssim\; N^{- C_{j,k,n}\left(\frac{1}{p},\frac{1}{q}\right)}.
\end{align*}
\subsubsection*{3. Proof of the energy estimate
 (\ref{energy})}
To prove the estimate (\ref{energy})
we need lemmas \ref{kernelsk}, \ref{kernelsq} and \ref{even} that we give below.
Lemma \ref{kernelsk} is based on the assertions
of lemmas \ref{nonstationary} and
\ref{stationary0}.
\begin{lem}\label{nonstationary}
(\cite{stein}, Proposition 1 Chapter VIII)
Let $\,\psi \in C^{\infty}_{c}\left(\mathbb{R}
\right)\,$ and let $\displaystyle\; I(\lambda)=
\int_{\mathbb{R}}\,\psi(s)\,e^{\imath \,\lambda\,s}\,ds. \,$ Then $\;\displaystyle |I(\lambda)|\;\lesssim\;
\min{\left\{
\frac{1}{1+|\lambda|},
\frac{1}{1+\lambda^{2}}\right\}}.$
\end{lem}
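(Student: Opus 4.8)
The plan is to prove this by the most elementary incarnation of the non‑stationary phase principle: repeated integration by parts, after splitting the range of $\lambda$ into a bounded part and a large part. First I would record the trivial bound: since $\psi\in C^{\infty}_{c}(\mathbb{R})\subset L^{1}(\mathbb{R})$, we have $|I(\lambda)|\leq\int_{\mathbb{R}}|\psi(s)|\,ds=\parallel\psi\parallel_{L^{1}}$ for every $\lambda$. On the regime $|\lambda|\leq 1$ this already suffices, because there both $\tfrac{1}{1+|\lambda|}$ and $\tfrac{1}{1+\lambda^{2}}$ lie between $\tfrac12$ and $1$, so $|I(\lambda)|\lesssim 1\approx\min\{\tfrac{1}{1+|\lambda|},\tfrac{1}{1+\lambda^{2}}\}$.

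For $|\lambda|\geq 1$ I would exploit the identity $e^{\imath\lambda s}=(\imath\lambda)^{-1}\partial_{s}e^{\imath\lambda s}$ and integrate by parts twice. Since $\psi$ has compact support, every boundary term vanishes, and one is left with
\[
I(\lambda)=-\frac{1}{\imath\lambda}\int_{\mathbb{R}}\psi^{\prime}(s)\,e^{\imath\lambda s}\,ds=\frac{1}{(\imath\lambda)^{2}}\int_{\mathbb{R}}\psi^{\prime\prime}(s)\,e^{\imath\lambda s}\,ds=-\frac{1}{\lambda^{2}}\int_{\mathbb{R}}\psi^{\prime\prime}(s)\,e^{\imath\lambda s}\,ds .
\]
Taking absolute values and using that $\psi^{\prime\prime}\in C^{\infty}_{c}(\mathbb{R})\subset L^{1}(\mathbb{R})$ gives $|I(\lambda)|\leq\lambda^{-2}\parallel\psi^{\prime\prime}\parallel_{L^{1}}\lesssim\lambda^{-2}\approx\tfrac{1}{1+\lambda^{2}}$; and on $|\lambda|\geq 1$ one has $\lambda^{2}\geq|\lambda|$, so $\tfrac{1}{1+\lambda^{2}}=\min\{\tfrac{1}{1+|\lambda|},\tfrac{1}{1+\lambda^{2}}\}$.

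Combining the two cases yields $|I(\lambda)|\lesssim\min\{\tfrac{1}{1+|\lambda|},\tfrac{1}{1+\lambda^{2}}\}$, with implicit constant depending only on $\parallel\psi\parallel_{L^{1}}+\parallel\psi^{\prime\prime}\parallel_{L^{1}}$. There is no genuine obstacle here; the two points that merely require care are the separation of the small‑$\lambda$ regime — where integration by parts carries no gain and one simply invokes the $L^{1}$ bound — from the large‑$\lambda$ regime, and the observation that the compact support of $\psi$ annihilates the boundary contributions at each integration by parts. (In fact the same argument, integrating by parts $m$ times, gives decay of order $|\lambda|^{-m}$ for every $m$, but only the stated bound is needed in what follows.)
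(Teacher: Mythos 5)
Your proof is correct and is precisely the classical non-stationary phase argument (repeated integration by parts against $e^{\imath\lambda s}$, with the trivial $L^{1}$ bound taking over in the bounded-$\lambda$ regime) that the cited reference employs; the paper states this lemma by citation to \cite{stein} only, so your argument matches the intended proof.
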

Observing that $\;\displaystyle \int_{0}^{1}\,e^{\imath \,\lambda\,s^{2}}\,ds\,=\,
\frac{1}{2}\int_{-1}^{1}\,e^{\imath \,\lambda\,s^{2}}\,ds\;$ and arguing as
in (\ref{t1})-(\ref{44}) implies the estimate
in Lemma \ref{stationary0}.
\begin{lem}\label{stationary0}
\begin{align*}
\left|\int_{0}^{1}\,e^{\imath \,\lambda\,s^{2}}\,ds\right|\;\lesssim \; \max{\left\{\frac{1}{1+\sqrt{|\lambda|}},
\frac{1}{1+|\lambda|}\right\}}.
\end{align*}
\end{lem}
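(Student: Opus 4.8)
The plan is to separate the small-frequency regime $|\lambda|\leq 1$, where the trivial estimate already suffices, from the large-frequency regime $|\lambda|\geq 1$, where the $|\lambda|^{-1/2}$ decay is extracted by exactly the computation $(\ref{t1})$--$(\ref{44})$ carried out above. Since $|e^{\imath\lambda s^{2}}|=1$ on the interval $[0,1]$ of length one, $\left|\int_{0}^{1}e^{\imath\lambda s^{2}}\,ds\right|\leq 1$; and when $|\lambda|\leq 1$ the quantity $\max\{(1+\sqrt{|\lambda|})^{-1},(1+|\lambda|)^{-1}\}$ is at least $1/2$, so the estimate holds in this range with implied constant $2$.

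For $|\lambda|\geq 1$, note first that conjugating the integrand replaces $\lambda$ by $-\lambda$ without changing the modulus, so we may assume $\lambda>0$. Using that the phase is even in $s$, write $\int_{0}^{1}e^{\imath\lambda s^{2}}\,ds=\frac{1}{2}\int_{-1}^{1}e^{\imath\lambda s^{2}}\,ds$ and then decompose, as in $(\ref{t1})$, $\int_{-1}^{1}=\int_{-\infty}^{+\infty}-2\int_{1}^{+\infty}$. The full-line term is a Fresnel integral: applying the identity used for $(\ref{11})$ with $a=-\imath\lambda$ gives $\int_{-\infty}^{+\infty}e^{\imath\lambda s^{2}}\,ds=\sqrt{\pi/\lambda}\,e^{\imath\pi/4}=\mathcal{O}(|\lambda|^{-1/2})$. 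For the tail, integrate by parts via $e^{\imath\lambda s^{2}}=(2\imath\lambda s)^{-1}\partial_{s}e^{\imath\lambda s^{2}}$; the boundary contribution at $s=1$ is $\mathcal{O}(1/\lambda)$, and the remaining integral $\int_{1}^{+\infty}(2\imath\lambda s^{2})^{-1}e^{\imath\lambda s^{2}}\,ds$ is $\mathcal{O}(1/\lambda)$ because $s^{-2}$ is integrable on $[1,\infty)$ — exactly as in the derivation of $(\ref{33})$. Hence $\left|\int_{0}^{1}e^{\imath\lambda s^{2}}\,ds\right|\lesssim|\lambda|^{-1/2}+|\lambda|^{-1}\lesssim|\lambda|^{-1/2}$, and for $|\lambda|\geq 1$ one has $|\lambda|^{-1/2}\lesssim\max\{(1+\sqrt{|\lambda|})^{-1},(1+|\lambda|)^{-1}\}$.

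Putting the two ranges together gives the claim. There is no real obstacle here: the only points demanding any care are choosing the correct branch of $\sqrt{\pi/(-\imath\lambda)}$ in the Fresnel evaluation and the bookkeeping that merges the bound $1$ valid for $|\lambda|\leq 1$ with the bound $|\lambda|^{-1/2}$ valid for $|\lambda|\geq 1$ into the single expression on the right-hand side — which is $\approx 1$ for $|\lambda|\leq 1$ and $\approx|\lambda|^{-1/2}$ for $|\lambda|\geq 1$, the secondary term $(1+|\lambda|)^{-1}$ merely recording the cruder $\mathcal{O}(1/\lambda)$ endpoint error.
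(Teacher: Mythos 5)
Your proof is correct and follows the same route the paper indicates: fold $[0,1]$ onto $[-1,1]$ using the evenness of $s\mapsto e^{\imath\lambda s^2}$, then decompose as complete Gaussian/Fresnel integral minus tails, evaluate the former via the Gaussian identity and bound the latter by integration by parts. You additionally spell out the easy $|\lambda|\le 1$ regime and exploit the symmetry of the integrand to collapse the two tail integrals of $(\ref{t1})$ into a single one, but these are cosmetic streamlinings of the argument $(\ref{t1})$--$(\ref{44})$ that the paper cites as its proof.
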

\begin{lem}\label{kernelsk}
Let $\,\psi \in C^{\infty}_{c}\left(\mathbb{R}
\right)\,$ and let
$\;K_{N}^{j,k}:{\mathbb{R}}^{n}\times{\mathbb{R}}^{n}
\longrightarrow {\mathbb{C}}\;$ be defined by
\begin{align*}
K_{N}^{j,k}(x,y):=
\left\{
  \begin{array}{ll}
 \displaystyle   \int_{\mathbb{R}}\,\psi(s)\,e^{\imath N \left({|x|}^{j}-{|y|}^{j}\right)s}\,ds, & \hbox{$k=1$;} \\\\
 \displaystyle  \int_{0}^{1}\,\,e^{\imath N \left({|x|}^{j}-{|y|}^{j}\right)s^{2}}\,ds  , & \hbox{$k=2$.}
  \end{array}
\right.
\end{align*}
Then
\begin{align}
\label{kernelsk1}
&\hspace*{-1 cm}
|K_{N}^{j,1}(x,y)|\;\lesssim\;
\min{\left\{
\left(1+N\,\left|{|x|}^{j}-{|y|}^{j}\right|\right)^{-1},
\left(1+N^{2} \, \left({|x|}^{j}-{|y|}^{j}\right)^{2}\right)^{-1}
\right\}},
\\ \label{kernelsk2}
&\hspace*{-1 cm} |K_{N}^{j,2}(x,y)|\;\lesssim\;
\max{\left\{\left( 1+\sqrt{N}\, \sqrt{\left|{|x|}^{j}-{|y|}^{j}\right|}\right)^{-1},
\left(1+N\, \left|{|x|}^{j}-{|y|}^{j}\right|\right)^{-1}\right\}}.
\end{align}
\end{lem}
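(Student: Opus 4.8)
The plan is to obtain both inequalities directly from Lemmas \ref{nonstationary} and \ref{stationary0} by freezing $x$ and $y$ and absorbing the factor $|x|^{j}-|y|^{j}$ into the frequency parameter. Fix $x,y\in\mathbb{R}^{n}$ and put $\lambda:=N\left(|x|^{j}-|y|^{j}\right)\in\mathbb{R}$, so that in each of the two integrals defining $K_{N}^{j,k}(x,y)$ the phase is exactly $\lambda\,s^{k}$. Everything then comes down to feeding $\lambda$ into the two one-dimensional estimates and translating back.

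\textbf{Case $k=1$.} The definition of $K_{N}^{j,1}$ gives $K_{N}^{j,1}(x,y)=\int_{\mathbb{R}}\psi(s)\,e^{\imath\lambda s}\,ds=I(\lambda)$ in the notation of Lemma \ref{nonstationary}, with $\psi\in C^{\infty}_{c}(\mathbb{R})$ as that lemma requires. Applying Lemma \ref{nonstationary} yields $|K_{N}^{j,1}(x,y)|\lesssim\min\{(1+|\lambda|)^{-1},(1+\lambda^{2})^{-1}\}$, and substituting $|\lambda|=N\,\big||x|^{j}-|y|^{j}\big|$ and $\lambda^{2}=N^{2}\left(|x|^{j}-|y|^{j}\right)^{2}$ is precisely (\ref{kernelsk1}). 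The estimate is valid regardless of the sign of $|x|^{j}-|y|^{j}$, and when $|x|=|y|$ it reduces to the harmless $\big|\int\psi\big|\lesssim 1$.

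\textbf{Case $k=2$.} Now $K_{N}^{j,2}(x,y)=\int_{0}^{1}e^{\imath\lambda s^{2}}\,ds$; replacing $\lambda$ by $|\lambda|$ via complex conjugation (which leaves the modulus unchanged) we may invoke Lemma \ref{stationary0} to get $|K_{N}^{j,2}(x,y)|\lesssim\max\{(1+\sqrt{|\lambda|})^{-1},(1+|\lambda|)^{-1}\}$. Writing $\sqrt{|\lambda|}=\sqrt{N}\,\sqrt{\big||x|^{j}-|y|^{j}\big|}$ and $|\lambda|=N\,\big||x|^{j}-|y|^{j}\big|$ gives (\ref{kernelsk2}), and once more the case $|x|=|y|$ is the trivial bound by $1$.

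There is essentially no genuine obstacle in this lemma: all of the oscillatory work has already been carried out in Lemmas \ref{nonstationary} and \ref{stationary0} (the second of which is just the computation (\ref{t1})--(\ref{44}) repeated with the frequency $2N$ replaced by a general $\lambda$), and what remains is only the bookkeeping that converts the abstract frequency into $N\left(|x|^{j}-|y|^{j}\right)$. The single point deserving a word of care is that the sign of $|x|^{j}-|y|^{j}$ is unconstrained, which is why the bounds are phrased through $|\lambda|$, $\lambda^{2}$ and $\sqrt{|\lambda|}$ and why the conjugation remark is needed in the $k=2$ case.
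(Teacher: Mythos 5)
Your proof is correct and matches the paper's intent exactly: the paper gives no separate argument for Lemma~\ref{kernelsk} beyond remarking that it ``is based on the assertions of lemmas \ref{nonstationary} and \ref{stationary0},'' which is precisely the substitution $\lambda = N\left(|x|^{j}-|y|^{j}\right)$ that you carry out. Your note about handling negative $\lambda$ by conjugation in the $k=2$ case is a sensible detail the paper leaves implicit.
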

The next lemma is mainly a consequence of
Young's inequality.
\begin{lem}\label{kernelsq}
Let $\,p,q,r\geq 1\,$ and $\,1/p+1/q+1/r =2.\,$
Let $\,f\in L^{p}(B),\,$ $\,g\in L^{q}(B)\,$
and $\,h\in L^{r}([0,1]).\,$  Then
\begin{align*}
 \left|\,\int_{{B}}\,\int_{{B}}\,
f(x)\,f(y)\,h(|x|^{m}-|y|^{m})\,dx\,dy\,\right|\;\lesssim
\;\parallel f \parallel_{L^{p}(B)}\,
\parallel g \parallel_{L^{q}(B)}\,
\parallel h \parallel_{L^{r}([0,1])}
\end{align*}
provided $\,m\leq n$.
\end{lem}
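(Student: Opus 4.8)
The plan is to collapse the bilinear form to a one–dimensional trilinear convolution expression on $[0,1]$ and then apply Young's inequality, whose admissibility relation is exactly the hypothesis $1/p+1/q+1/r=2$.

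First I would take absolute values and pass to polar coordinates $x=\rho\,\omega$, $y=\sigma\,\theta$ with $\rho,\sigma\in[0,1]$ and $\omega,\theta\in S^{n-1}$. Since the kernel sees $x,y$ only through $|x|^m-|y|^m=\rho^m-\sigma^m$, the angular integrations factor out and we are left with
\[
\int_{0}^{1}\!\!\int_{0}^{1} F(\rho)\,G(\sigma)\,\bigl|h(\rho^m-\sigma^m)\bigr|\,\rho^{\,n-1}\sigma^{\,n-1}\,d\rho\,d\sigma,
\qquad
F(\rho):=\int_{S^{n-1}}|f(\rho\omega)|\,d\omega,
\]
with $G$ defined analogously from $g$. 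Hölder's inequality on the sphere gives $F(\rho)^{p}\le \omega_{n-1}^{\,p-1}\int_{S^{n-1}}|f(\rho\omega)|^{p}d\omega$, hence $\|F\|_{L^{p}(\rho^{n-1}d\rho)}\lesssim\|f\|_{L^{p}(B)}$, and likewise $\|G\|_{L^{q}(\sigma^{n-1}d\sigma)}\lesssim\|g\|_{L^{q}(B)}$. So it suffices to bound the displayed integral by $\|F\|_{L^{p}(\rho^{n-1}d\rho)}\,\|G\|_{L^{q}(\sigma^{n-1}d\sigma)}\,\|h\|_{L^{r}([0,1])}$.

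Next I would substitute $u=\rho^{m}$, $v=\sigma^{m}$, which turns $\rho^m-\sigma^m$ into $u-v$ and $\rho^{n-1}d\rho$ into $w(u)\,du$ with $w(u)=\tfrac1m\,u^{\,n/m-1}$. This is the single place where the hypothesis $m\le n$ is used: it makes the exponent $n/m-1$ nonnegative, so $0\le w\le 1/m$ on $[0,1]$ (for $m>n$ the weight is singular at the origin and the scheme collapses). Writing $\tilde F,\tilde G$ for the transported functions, splitting $w(u)=w(u)^{1/p}w(u)^{1/p'}$, $w(v)=w(v)^{1/q}w(v)^{1/q'}$, setting $P:=\tilde F\,w^{1/p}$ and $Q:=\tilde G\,w^{1/q}$ (so that $\|P\|_{L^{p}([0,1])}=\|F\|_{L^{p}(\rho^{n-1}d\rho)}$ and similarly for $Q$), and bounding the residual factor $w(u)^{1/p'}w(v)^{1/q'}$ by $m^{-1/p'-1/q'}$, the integral is dominated up to a constant by
\[
\int_{0}^{1}\!\!\int_{0}^{1} P(u)\,Q(v)\,\bigl|h(u-v)\bigr|\,du\,dv .
\]

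Finally I would read this last integral as $\int_{0}^{1}Q(v)\,(P*\tilde h)(v)\,dv$ with $\tilde h(t):=|h(-t)|$, bound it by $\|Q\|_{q}\,\|P*\tilde h\|_{q'}$ via Hölder and then by $\|Q\|_{q}\|P\|_{p}\|\tilde h\|_{r}$ via Young's convolution inequality, whose condition $1+1/q'=1/p+1/r$ is precisely $1/p+1/q+1/r=2$; unwinding the two reductions then yields the claim. Two minor points deserve a word: the argument $u-v$ ranges over $[-1,1]$ while $h$ is prescribed on $[0,1]$, but in every application $h$ depends only on $\bigl|\,|x|^{j}-|y|^{j}\,\bigr|$, so one may take $h$ even (or simply extend it, altering $\|h\|_{L^{r}}$ by a bounded factor); and the endpoint cases forced by $1/p+1/q+1/r=2$ are covered by the usual conventions for Young's inequality. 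The only genuinely load–bearing step is the boundedness of the weight $w$, i.e. the constraint $m\le n$; everything else is bookkeeping, in keeping with the lemma being essentially a consequence of Young's inequality.
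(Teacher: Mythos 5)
Your proposal is correct and uses essentially the same ingredients as the paper's proof: polar coordinates, the radial change of variables $r\mapsto r^m$, the hypothesis $m\le n$ to control the resulting Jacobian weight, Young's convolution inequality (whose admissibility condition is exactly $1/p+1/q+1/r=2$), and H\"older's inequality on the sphere. The only difference is the order of operations — you integrate out the angular variables first (defining $F,G$ and applying H\"older on $S^{n-1}$ pointwise in the radius) and then apply Young's once on the radial integral, whereas the paper fixes the angles, applies Young's inequality for each pair $(\theta_1,\theta_2)$, and only then sums over the sphere via H\"older — a reordering that does not change the substance of the argument.
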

\begin{proof}
Switching to polar coordinates by setting
$\,x=r_{1}\theta_{1}\,$ and $\,y=r_{2}\theta_{2}\,$
then applying Fubini's theorem gives
\begin{align}\label{newlemma1}
 \left|\,\int_{{B}}\,\int_{{B}}\,
f(x)\,f(y)\,h(|x|^{m}-|y|^{m})\,dx\,dy\,\right|
\,\leq\,\int_{S^{n-1}}\,\int_{S^{n-1}}\,
|Q(\theta_{1},\theta_{2})|
\,d\theta_{1}\,d\theta_{2}
\end{align}
where
\begin{align*}
Q(\theta_{1},\theta_{2})\,=\,
\int_{0}^{1}\,\int_{0}^{1}\,
f(r_{1}\theta_{1})\,g(r_{2}\theta_{2})
\,h\left({r_{1}}^{m}-{r_{2}}^{m}\right)
\,r_{1}^{n-1}\,r_{2}^{n-1}\,dr_{1}\,dr_{2}.
\end{align*}
Changing variables
$\:r_{i}^{m}\,\longrightarrow\, \rho_{i}\:$
then using Young's inequality we get
\begin{align*}
\hspace*{-1 cm}
|Q(\theta_{1},\theta_{2})|\,\lesssim\,
\left(\int_{0}^{1}\left|f(\sqrt[m]{\rho_{1}}\,\theta_{1})
\right|^{p}\,\rho_{1}^{p\frac{n-m}{m}}\,d\rho_{1}\right)
^{\frac{1}{p}}
\left(\int_{0}^{1}\left|g(\sqrt[m]{\rho_{2}}\,\theta_{2})
\right|^{q}\,\rho_{2}^{q\frac{n-m}{m}}\,d\rho_{2}\right)
^{\frac{1}{q}}
\parallel h \parallel_{L^{r}([0,1])}.
\end{align*}
Reversing the variables change
in the first two integrals on the right-hand side of the latter estimate we obtain
\begin{align}\label{newlemma2}
\hspace*{-1 cm}
\nonumber |Q(\theta_{1},\theta_{2})|\,\lesssim&\,
\left(\int_{0}^{1}\left|f({r_{1}}\,\theta_{1})
\right|^{p}\,r_{1}^{(p-1)(n-m)}\,
r_{1}^{n-1}\,dr_{1}\right)
^{\frac{1}{p}}\\&\;\nonumber
\left(\int_{0}^{1}\left|g({r_{2}}\,\theta_{2})
\right|^{q}\,r_{2}^{(p-1)(n-m)}\,
r_{2}^{n-1}\,dr_{2}\right)
^{\frac{1}{q}}\,
\parallel h \parallel_{L^{r}([0,1])}\\
\leq&
\left(\int_{0}^{1}\left|f({r_{1}}\,\theta_{1})
\right|^{p}\,r_{1}^{n-1}\,dr_{1}\right)
^{\frac{1}{p}}
\left(\int_{0}^{1}\left|g({r_{2}}\,\theta_{2})
\right|^{q}\,r_{2}^{n-1}\,dr_{2}\right)
^{\frac{1}{q}}
\,\parallel h \parallel_{L^{r}([0,1])}
\end{align}
as long as $\,m\leq n.$ Invoking H\"{o}lder's inequality it follows that
\begin{align}
\nonumber  &\int_{S^{n-1}}\,
\left(\int_{0}^{1}\left|f({r_{1}}\,\theta_{1})
\right|^{p}\,
r_{1}^{n-1}\,dr_{1}\right)^{\frac{1}{p}}\,d\theta_{1}\\
\label{newlemma3} &\hspace{0.8 cm}\leq\;\omega_{n-1}^{1-\frac{1}{p}}\;
 \left( \int_{S^{n-1}}\,\int_{0}^{1}
\left|f({r_{1}}\,\theta_{1})
\right|^{p}\,r_{1}^{n-1}\,dr_{1}\,d
\theta_{1}\right)^{\frac{1}{p}}\;=
\;\omega_{n-1}^{1-\frac{1}{p}}\;
\parallel f \parallel_{L^{p}(B)},\\
\nonumber  &
\int_{S^{n-1}}\,
\left(\int_{0}^{1}\left|g({r_{2}}\,\theta_{2})
\right|^{q}\,
r_{2}^{n-1}\,dr_{2}\right)^{\frac{1}{q}}\,d\theta_{2}\\
\label{newlemma4} &
\hspace{0.8 cm}\leq\;\omega_{n-1}^{1-\frac{1}{q}}\;
 \left( \int_{S^{n-1}}\,\int_{0}^{1}
\left|g({r_{2}}\,\theta_{2})
\right|^{q}\,r_{2}^{n-1}\,dr_{2}\,d
\theta_{2}\right)^{\frac{1}{q}}\;=
\;\omega_{n-1}^{1-\frac{1}{q}}\;
\parallel g \parallel_{L^{q}(B)}.
\end{align}
Returning to (\ref{newlemma1})
with the estimates (\ref{newlemma2}),
(\ref{newlemma3}) and (\ref{newlemma4}) concludes the proof.
\end{proof}
Remark \ref{even0} together with Lemma \ref{homogeneous} are needed to show Lemma \ref{even}.
\begin{rem}\label{even0}
Suppose that the integral
\begin{align*}
 J\,=\,
\int_{-b_{1}}^{b_{1}}...\int_{-b_{m}}^{b_{m}}
\,K(t_{1},...,t_{m})\,
f_{1}(t_{1})...f_{m}(t_{m})\,dt_{1}...dt_{m}
\end{align*}
exists. If $\,K\,$
is even in all its variables then
\begin{align*}
 J\,=\,
\int_{0}^{b_{1}}...
\int_{0}^{b_{m}}
\,K(t_{1},...,t_{m})\,
\prod_{i=1}^{m}\left(f_{i}(t_{i})+f_{i}(-t_{i})\right)
\,dt_{1}...dt_{m}.
\end{align*}
This follows easily from
the fact that the integrand
in the second expression for $\,J\,$
is even in all variables.
\end{rem}
Lemma \ref{homogeneous} discusses the boundedness
of a bilinear form with a homogeneous kernel.
\begin{lem}\label{homogeneous}
Let $\,f\in L^{p}([0,1])\,$ and
$\,g\in L^{q}([0,1])\,$
with $\,1\leq p \leq +\infty\,$ and $\,1/p\, +\, 1/q=1.\,$
Assume that $\,K:{[0,1]}\times
{[0,1]}\longrightarrow {\mathbb{R}}\,$
is homogeneous of degree $-1,\,$
that is, $\,K(\lambda x, \lambda y)=
\lambda^{-1} K(x,y),\,$ for $\,\lambda>0.\,$
Assume also that
\begin{align*}
 \int_{0}^{+\infty}
\,\left|K(x,1)\right|\,{x}^{-\frac{1}{p}}\,dx
\,\lesssim\,1 \qquad \text{or } \qquad
 \int_{0}^{+\infty}
\,\left|K(1,y)\right|\,{y}^{-\frac{1}{q}}\,dy
\,\lesssim\,1.
\end{align*}
Then
\begin{align*}
\left|\int_{0}^{1}\int_{0}^{1}\,
K(x,y)\,f(x)\,g(y)\,dx\,dy\right|\;\lesssim\;
\parallel f \parallel_{L^{p}([0,1])}\,
\parallel g \parallel_{L^{q}([0,1])}.
\end{align*}
 \end{lem}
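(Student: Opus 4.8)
The plan is to treat Lemma~\ref{homogeneous} as the interval version of the classical Hardy--Littlewood--P\'{o}lya inequality for bilinear forms whose kernel is homogeneous of degree~$-1$, and to prove it by inserting a power weight and applying H\"{o}lder's inequality in the two variables simultaneously. First I would extend $f$ and $g$ by zero outside $[0,1]$ and note that, thanks to homogeneity, the value $K(1,t)$ is unambiguously defined for every $t>0$ --- for $t\geq 1$ one reads off $K(1,t)=t^{-1}K(1/t,1)$ with $(1/t,1)\in[0,1]^{2}$ --- so that both integral conditions in the hypothesis are meaningful. Moreover the two conditions are equivalent: the substitution $t\mapsto 1/t$ together with the homogeneity identity $K(1,1/t)=t\,K(t,1)$ turns $\int_{0}^{+\infty}|K(1,t)|\,t^{-1/q}\,dt$ into $\int_{0}^{+\infty}|K(t,1)|\,t^{-1/p}\,dt$, using $1/q-1=-1/p$. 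Hence it suffices to assume that both quantities are $\lesssim 1$.

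Then I would split the integrand symmetrically, writing
\begin{align*}
|K(x,y)\,f(x)\,g(y)|=\bigl[\,|K(x,y)|^{1/p}\,(y/x)^{-1/(pq)}\,|f(x)|\,\bigr]\cdot\bigl[\,|K(x,y)|^{1/q}\,(y/x)^{1/(pq)}\,|g(y)|\,\bigr],
\end{align*}
and apply H\"{o}lder's inequality with exponents $p$ and $q$ (this is where $1/p+1/q=1$ enters) to the double integral over $[0,1]^{2}$. This dominates $\bigl|\int_{0}^{1}\!\int_{0}^{1}Kfg\bigr|$ by $\mathcal{A}^{1/p}\mathcal{B}^{1/q}$ with
\begin{align*}
\mathcal{A}&=\int_{0}^{1}|f(x)|^{p}\Bigl(\int_{0}^{1}|K(x,y)|\,(y/x)^{-1/q}\,dy\Bigr)dx,\\
\mathcal{B}&=\int_{0}^{1}|g(y)|^{q}\Bigl(\int_{0}^{1}|K(x,y)|\,(y/x)^{1/p}\,dx\Bigr)dy.
\end{align*}
In $\mathcal{A}$ I would substitute $y=xt$ in the inner integral and use $K(x,xt)=x^{-1}K(1,t)$ to obtain $\int_{0}^{1/x}|K(1,t)|\,t^{-1/q}\,dt\leq\int_{0}^{+\infty}|K(1,t)|\,t^{-1/q}\,dt\lesssim 1$, uniformly in $x$, whence $\mathcal{A}\lesssim\parallel f\parallel_{L^{p}([0,1])}^{p}$; in $\mathcal{B}$ I would substitute $x=yu$ and use $K(yu,y)=y^{-1}K(u,1)$ to obtain $\int_{0}^{1/y}|K(u,1)|\,u^{-1/p}\,du\leq\int_{0}^{+\infty}|K(u,1)|\,u^{-1/p}\,du\lesssim 1$, uniformly in $y$, whence $\mathcal{B}\lesssim\parallel g\parallel_{L^{q}([0,1])}^{q}$. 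Combining the two estimates gives the asserted bound. Equivalently, one can first bound $\bigl|\int_{0}^{1}\!\int_{0}^{1}Kfg\bigr|\leq\parallel f\parallel_{L^{p}}\parallel\int_{0}^{1}K(\cdot,y)g(y)\,dy\parallel_{L^{q}}$ by H\"{o}lder in $x$, since $p'=q$, and then bound the operator $g\mapsto\int_{0}^{1}K(\cdot,y)g(y)\,dy$ on $L^{q}$ by Minkowski's integral inequality after the substitution $y=xt$, arriving at the same conclusion.

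I do not expect a genuine obstacle here: the statement is a routine adaptation of a standard inequality. The only points that need a little care are the endpoints $p\in\{1,+\infty\}$, where the weight exponents $1/(pq)$ degenerate to $0$, H\"{o}lder is used in its $L^{1}$--$L^{\infty}$ form, and the inner-integral condition reduces to exactly one of the two hypotheses; and the bookkeeping that lets one pass from $K$ defined on $[0,1]^{2}$ to the values $K(1,t)$ and $K(t,1)$ for $t>1$ via homogeneity, so that the one-dimensional integrals over $(0,+\infty)$ are meaningful. Both are dispatched in the first step above. Finally, as already announced in the text, Lemma~\ref{homogeneous} combined with Remark~\ref{even0} is what will be used to establish Lemma~\ref{even}.
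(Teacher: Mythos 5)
Your proposal is correct, and it takes a genuinely different route from the paper. You prove the lemma via the classical Schur test with power weights: insert the splitting factor $(y/x)^{\pm 1/(pq)}$, apply H\"{o}lder with exponents $p,q$ to the double integral over $[0,1]^{2}$, and then perform the homogeneity-exploiting change of variables \emph{inside} each of the two resulting single-variable integrals to reduce to the hypothesised one-dimensional bounds on $\int_{0}^{\infty}|K(1,t)|t^{-1/q}\,dt$ and $\int_{0}^{\infty}|K(u,1)|u^{-1/p}\,du$. The paper proceeds in the opposite order: it first makes the change of variables $x=yu$ in the double integral, uses homogeneity to factor $K(u,1)$ out, applies Fubini to swap the $u$- and $y$-integrations (splitting at $u=1$ because of the finite range), and only then applies H\"{o}lder to the inner $y$-integral for each fixed $u$. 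Both arguments hinge on the same homogeneity identity and arrive at the same quantity $\int_{0}^{\infty}|K(u,1)|u^{-1/p}\,du$, but the decomposition is different: yours is the standard weighted-H\"{o}lder/Schur-test template (closer to the textbook proofs of Hilbert's and Hardy's inequalities), while the paper's is a more hands-on Fubini argument chosen precisely because, as the author remarks, the integrals are over $[0,1]$ rather than $[0,\infty)$ and they wanted a self-contained treatment of that case. Your version handles the finite range just as cleanly by bounding $\int_{0}^{1/x}\leq\int_{0}^{\infty}$, and your preliminary observation that the two displayed integral conditions are equivalent (via $t\mapsto 1/t$ together with $K(1,1/t)=t\,K(t,1)$ and $1/q-1=-1/p$) is a small bonus that the paper does not make explicit --- the paper instead runs the argument once assuming the first condition and asserts the symmetric case ``follows analogously.'' You also rightly flag that at the endpoints $p\in\{1,\infty\}$ H\"{o}lder degenerates to the $L^{1}$--$L^{\infty}$ pairing and the weight exponent vanishes, which your argument accommodates without change. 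No gap.
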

In \cite{hardy}, one can find a proof
for the case when the integrals
that define the bilinear form are taken over $\,[0,+\infty[.\,$ We treat this slightly trickier
case of finite range without using the result in
\cite{hardy}.
\begin{proof}
Let $\,\displaystyle Q(f,g)\,=\,
\int_{0}^{1}\int_{0}^{1}\,
K(x,y)\,f(x)\,g(y)\,dx\,dy.\,$
Using a change of variables,
$\,x\rightarrow y.u,\,$ and exploiting the homogeneity
of the kernel we have
\begin{align*}
\hspace*{-0.8 cm} Q(f,g)\,=\,
\int_{0}^{1}y\,g(y)\int_{0}^{\frac{1}{y}}
K(y.u,y)\,f(y.u)\,du\,dy\,=\,
\int_{0}^{1}g(y)\int_{0}^{\frac{1}{y}}
K(u,1)\,f(y.u)\,du\,dy.
\end{align*}
By Fubini's theorem we may write
 \begin{align}\label{qfg}
\hspace*{-1 cm}
Q(f,g)=\int_{0}^{1} K(u,1) \int_{0}^{1} f(y.u)\,g(y)\,dy\,du+
\int_{1}^{+\infty} K(u,1) \int_{0}^{\frac{1}{u}} f(y.u)\,g(y)\,dy\,du.
\end{align}
But by  H\"{o}lder's inequality we have
\begin{align*}
\hspace*{-0.8 cm}
\left|\int_{0}^{1}\,f(y.u)\,g(y)\,dy\right|
\;\leq&\;\left(\int_{0}^{1}\,|f(y.u)|^{p}\,dy\right)
^{\frac{1}{p}}
\left(\int_{0}^{1}\,|g(y)|^{q}\,dy\right)^{\frac{1}{q}}\\
\,&\hspace*{-2 cm}=u^{-\frac{1}{p}}\,
\left(\int_{0}^{u}\,|f(x)|^{p}\,dx\right)
^{\frac{1}{p}}\,
\parallel g \parallel_{L^{q}([0,1])}
\;\leq\;u^{-\frac{1}{p}}\,\parallel f \parallel_{L^{q}([0,1])}\,
\parallel g \parallel_{L^{q}([0,1])}
\end{align*}
for all $\,0 < u < 1.\,$ Similarly
\begin{align*}
\hspace*{-0.8 cm}
\left|\int_{0}^{\frac{1}{u}}\,f(y.u)\,g(y)\,dy\right|
\;\leq&\;\left(\int_{0}^{\frac{1}{u}}
\,|f(y.u)|^{p}\,dy\right)
^{\frac{1}{p}}\,
\left(\int_{0}^{\frac{1}{u}}\,|g(y)|^{q}
\,dy\right)^{\frac{1}{q}}\\
\,&\hspace*{-3.4 cm}=u^{-\frac{1}{p}}\,
\left(\int_{0}^{1}\,|f(x)|^{p}\,dx\right)
^{\frac{1}{p}}\,
\left(\int_{0}^{\frac{1}{u}}\,|g(y)|^{q}
\,dy\right)^{\frac{1}{q}}
\;\leq\;u^{-\frac{1}{p}}\,\parallel f \parallel_{L^{q}([0,1])}\,
\parallel g \parallel_{L^{q}([0,1])}
\end{align*}
for all $\,1< u < +\infty.\,$ Using the last two inequalities together
with the triangle inequality  in (\ref{qfg})
we get
 \begin{align*}
\hspace*{-1 cm}
|Q(f,g)|\leq& \,
\,\parallel f \parallel_{L^{q}([0,1])}\,
\parallel g \parallel_{L^{q}([0,1])}\,
\left(\int_{0}^{1} |K(u,1)| \,u^{-\frac{1}{p}}\,du+
\int_{1}^{+\infty} |K(u,1)|\,u^{-\frac{1}{p}}\,du
\right)\\
 \lesssim&\;
\parallel f \parallel_{L^{q}([0,1])}\,
\parallel g \parallel_{L^{q}([0,1])},
\qquad \text{when} \quad
\int_{0}^{+\infty} |K(x,1)|\,x^{-\frac{1}{p}}\,dx
\,\lesssim\,1.
\end{align*}
When $\;\displaystyle \int_{0}^{+\infty}\left|K(1,y)\right|
{y}^{-\frac{1}{q}}dy\lesssim 1\;$
the assertion follows analogously.
\end{proof}
\begin{rem}
If $\,K(x,y)=\left( x + y \right)^{-1}\,$
in Lemma \ref{homogeneous} we get Hilbert's
inequality.
\end{rem}
\begin{lem}\label{even}
Let $\,f,g \in L^2([-1,1]).$ Then
\begin{align}
\label{even1}&\int_{-1}^{1}\,\int_{-1}^{1}\,
\frac{|f(x)||g(y)|}{1+N\,
\left|x^2-y^2\right|}
\,dx\,dy \;\lesssim\; \frac{1}{\sqrt{N}}\,
\parallel f \parallel_{L^{2}([-1,1])}\,\parallel g \parallel_{L^{2}([-1,1])}, \\
\label{even2} &\int_{-1}^{1}\,\int_{-1}^{1}\,
\frac{|f(x)|\,|g(y)|}{
\sqrt{\left|{x}^{2}-{y}^{2}\right|}}
\,dx\,dy \;\lesssim\; \parallel f \parallel_{L^{2}([-1,1])}\,\parallel g \parallel_{L^{2}([-1,1])}.
\end{align}
\end{lem}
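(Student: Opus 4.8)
The plan is to prove the homogeneous estimate (\ref{even2}) first and then read off (\ref{even1}) from it by a trivial pointwise bound on the kernel. Both kernels, $|x^{2}-y^{2}|^{-1/2}$ and $(1+N|x^{2}-y^{2}|)^{-1}$, are even in $x$ and even in $y$, so applying Remark \ref{even0} to the nonnegative integrand $|x^{2}-y^{2}|^{-1/2}\,|f(x)|\,|g(y)|$ (equivalently, splitting $[-1,1]^{2}$ into its four coordinate quadrants) lets us replace the domain $[-1,1]^{2}$ by $[0,1]^{2}$ at the cost of replacing $|f|,|g|$ by $F(x):=|f(x)|+|f(-x)|$ and $G(y):=|g(y)|+|g(-y)|$ on $[0,1]$. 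The elementary inequality $(a+b)^{2}\le 2(a^{2}+b^{2})$ gives $\|F\|_{L^{2}([0,1])}\le\sqrt{2}\,\|f\|_{L^{2}([-1,1])}$ and likewise for $G$, so it suffices to prove
\[
\int_{0}^{1}\!\int_{0}^{1}\frac{F(x)\,G(y)}{\sqrt{|x^{2}-y^{2}|}}\,dx\,dy\;\lesssim\;\|F\|_{L^{2}([0,1])}\,\|G\|_{L^{2}([0,1])}.
\]

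On $[0,1]^{2}$ one has $|x^{2}-y^{2}|^{-1/2}=|x-y|^{-1/2}(x+y)^{-1/2}=:K(x,y)$, and this $K$ is nonnegative, symmetric and homogeneous of degree $-1$. To invoke Lemma \ref{homogeneous} with $p=q=2$ the only thing left to check is
\[
\int_{0}^{\infty}|K(x,1)|\,x^{-1/2}\,dx\;=\;\int_{0}^{\infty}\frac{dx}{\sqrt{x}\,\sqrt{|x-1|}\,\sqrt{x+1}}\;<\;\infty,
\]
which holds because the integrand behaves like $x^{-1/2}$ near $x=0$, like $|x-1|^{-1/2}$ near $x=1$, and like $x^{-3/2}$ near $x=+\infty$ --- all three integrable. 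Lemma \ref{homogeneous} then delivers the bound displayed above, hence (\ref{even2}).

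To deduce (\ref{even1}), note that for every $t\ge 0$ and $N>0$ we have $2\sqrt{Nt}\le 1+Nt$ (since $(1-\sqrt{Nt})^{2}\ge0$), so $(1+Nt)^{-1}\le\tfrac12(Nt)^{-1/2}$. Taking $t=|x^{2}-y^{2}|$ yields, a.e.\ on $[-1,1]^{2}$,
\[
\frac{1}{1+N|x^{2}-y^{2}|}\;\le\;\frac{1}{2\sqrt{N}}\cdot\frac{1}{\sqrt{|x^{2}-y^{2}|}},
\]
and integrating against $|f(x)|\,|g(y)|$ and then invoking (\ref{even2}) gives (\ref{even1}).

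The crux is that the factor $x+y$ in $|x^{2}-y^{2}|=|x-y|\,(x+y)$ is what forces the decay $K(x,1)=\mathcal{O}(x^{-3/2})$ at infinity --- only $\mathcal{O}(x^{-1})$ would be available from $|x-y|^{-1/2}$ alone, and that fails the integrability test of Lemma \ref{homogeneous}. This is why passing to the positive quadrant via Remark \ref{even0} is essential rather than cosmetic: on $[-1,1]^{2}$ one cannot split off a genuinely nonnegative factor $(x+y)^{1/2}$, and a naive Schur test of $|x^{2}-y^{2}|^{-1/2}$ against Lebesgue measure on $[-1,1]$ diverges logarithmically near the origin. I expect the only genuinely load-bearing step to be checking that $\int_{0}^{\infty}|K(x,1)|\,x^{-1/2}\,dx<\infty$; everything else is bookkeeping.
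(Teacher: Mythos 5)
Your proof is correct, and for \eqref{even2} it coincides with the paper's: reduce to $[0,1]^2$ by evenness, factor $|x^{2}-y^{2}|^{-1/2}=|x-y|^{-1/2}(x+y)^{-1/2}$, and feed the resulting degree-$(-1)$ kernel into Lemma~\ref{homogeneous} with the integrability check $\int_0^\infty \tfrac{dz}{\sqrt{z}\sqrt{|1-z^2|}}<\infty$. For \eqref{even1}, however, you take a genuinely different and shorter route. The paper proves \eqref{even1} directly by a near/far-diagonal decomposition on $[0,1]^2$: on the strip $|x-y|\lesssim N^{-1/2}$ the kernel is bounded by $1$ and Young's inequality gives the factor $N^{-1/2}$ from the strip width; on the complement, $1+N|x^2-y^2|\gtrsim N|x-y|(x+y)\gtrsim \sqrt{N}(x+y)$, and Hilbert's inequality supplies the bound. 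You instead observe the pointwise inequality $(1+Nt)^{-1}\le \tfrac12(Nt)^{-1/2}$ (AM--GM), which immediately reduces \eqref{even1} to \eqref{even2} at the cost of a factor $\tfrac{1}{2}N^{-1/2}$. Your reduction collapses the entire argument for \eqref{even1} to a one-line corollary of \eqref{even2}, avoiding both Young's inequality and Hilbert's inequality and requiring no case analysis; the paper's decomposition is longer but exposes the scale $N^{-1/2}$ more explicitly as the width of the near-diagonal region. Both are correct and give the same conclusion up to constants.
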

\begin{proof}
Beginning with the estimate (\ref{even1}),
Remark \ref{even0} suggests estimating\\\\
$\; \displaystyle \int_{0}^{1}\,\int_{0}^{1}\,
\frac{|f(\pm x)||g(\pm y)|}{1+N\,
\left|x^2-y^2\right|}
\,dx\,dy.\;$ Let $\;\displaystyle W_{N}(f,g):=
\int_{0}^{1}\,\int_{0}^{1}\,
\frac{|f(x)||g(y)|}{1+N\,
\left|x^2-y^2\right|}
\,dx\,dy.$\\\\
If $\,x,y \geq 0\,$ and
$\,|x-y|>>1/\sqrt{N}\,$ then we also have
$\,x+y>>1/\sqrt{N}\,$ and consequently $\,N\left|x^2-y^2\right|>>1.\,$
Therefore
\begin{align}
\hspace{-0.8 cm}
\nonumber W_{N}(f,g)&\approx
\int\,\int_{
\substack{0\leq x,y\leq1,\\
|x-y|\lesssim\; 1/\sqrt{N}}}
\frac{|f(x)||g(y)|}{1+N\,
\left|x^2-y^2\right|}
\,dx\,dy+
\int\,\int_{
\substack{0\leq x,y\leq1,\\
|x-y|>> 1/\sqrt{N}}}
\frac{|f(x)||g(y)|}{1+N\,
\left|x^2-y^2\right|}
\,dx\,dy\\
\nonumber &\lesssim
\int\,\int_{
\substack{0\leq x,y\leq1,\\
|x-y|\lesssim\; 1/\sqrt{N}}}
{|f(x)||g(y)|}\,dx\,dy+
\frac{1}{N}\,\int\,\int_{
\substack{0\leq x,y\leq1,\\
|x-y|>> 1/\sqrt{N}}}
\frac{|f(x)||g(y)|}{\left|x^2-y^2\right|}
\,dx\,dy\\
\label{h1} &\lesssim
\int_{0}^{1}\,\int_{0}^{1}\,
\chi_{N}{\left(|x-y|\right)}{|f(x)||g(y)|}\,dx\,dy+
\frac{1}{\sqrt{N}}\,\int_{0}^{1}\,\int_{0}^{1}\,
\frac{|f(x)||g(y)|}{x+y}
\,dx\,dy
\end{align}
where $\,\chi_{N}\,$ is the characteristic function
of the interval $\,[0,1/\sqrt{N}\,].$
By Young's inequality we have
\begin{align}\label{h2}
\int_{0}^{1}\,\int_{0}^{1}\,
\chi_{N}{\left(|x-y|\right)}{|f(x)||g(y)|}
\,dx\,dy\,\leq\,
\frac{1}{\sqrt{N}}\,
\parallel f \parallel_{L^{2}([0,1])}\,\parallel g \parallel_{L^{2}([0,1])}.
\end{align}
And by Hilbert's inequality
\begin{align}\label{h3}
\int_{0}^{1}\,\int_{0}^{1}\,
\frac{|f(x)||g(y)|}{x+y}
\,dx\,dy\,\lesssim \;\parallel f \parallel_{L^{2}([0,1])}\,\parallel g \parallel_{L^{2}([0,1])}.
\end{align}
Using (\ref{h2}) together with
(\ref{h3}) in (\ref{h1}) we obtain
\begin{align*}
\int_{0}^{1}\,\int_{0}^{1}\,
\frac{|f(x)||g(y)|}{1+N\,
\left|x^2-y^2\right|}
\,dx\,dy\,\lesssim\,\frac{1}{\sqrt{N}}\,
\parallel f \parallel_{L^{2}([0,1])}\,\parallel g \parallel_{L^{2}([0,1])}.
\end{align*}
In obtaining (\ref{h1}), we worked only on the kernel of $W_{N}.$ It is therefore easy to see that replacing the function $\,x\rightarrow f(x)\,$
by the function $\,x\rightarrow f(-x)\,$
or $\,y\rightarrow g(y)\,$
by $\,y\rightarrow g(-y)\,$
then repeating the routine above
eventually leads to the estimate
\begin{align*}
\int_{0}^{1}\,\int_{0}^{1}\,
\frac{|f(\pm x)||g(\pm y)|}{1+N\,
\left|x^2-y^2\right|}
\,dx\,dy\,\lesssim\,\frac{1}{\sqrt{N}}\,
\parallel f \parallel_{L^{2}([-1,1])}\,\parallel g \parallel_{L^{2}([-1,1])}.
\end{align*}
This proves (\ref{even1}). Taking advantage of Remark \ref{even0} again and arguing like before,
it suffices to\\\\ estimate
$\displaystyle
 V(f,g)=\int_{0}^{1}\,\int_{0}^{1}\,
\frac{|f(x)|\,|g(y)|}{\sqrt{\left|{x}^{2}-{y}^{2}\right|}}
\,dx\,dy.\;$  Since $\; \displaystyle \int_{0}^{+\infty}\frac{dz}{\sqrt{z}\,\sqrt{|1-z^2|}}
\,\approx\,1,$\\\\
a direct application of Lemma \ref{homogeneous} then
gives $\,V(f,g)\,\lesssim\,\parallel f \parallel_{L^{2}([0,1])}\,\parallel g \parallel_{L^{2}([0,1])}.$
\end{proof}
We are now ready to prove (\ref{energy}).
We do this for each of the cases
$k=1$ and $k=2$ separately.\\
\textbf{The phase is linear in $\textbf{s}\,$ $\,(k=1)$}:\\
Let $\psi$ be a nonnegative smooth cutoff function
such that $\,{supp}\:\psi \subset\;]-1,2[\,$
and $\,\psi(s)=1\,$ on $\,[0,1]$.
Since $\,|T^{j,1}_{N} f |^2 \,=\, T^{j,1}_{N} f\;\;\overline{T^{j,1}_{N} f}.\,$ Then
\begin{align*}
&\hspace{-1 cm}\parallel T^{j,1}_{N} f \parallel^{2}_{L^{2}([0,1])}
\,=
\int_{0}^{1}\,|T^{j,1}_{N} f(s)|^{2}\,ds
\,\leq\,\int_{\mathbb{R}}\psi(s)\,|T^{j,1}_{N} f(s)|^{2}\,ds\\
&\hspace{-1 cm}=\;\int_{\mathbb{R}}\psi(s)\, T^{j,1}_{N} f(s)\;\overline{T^{j,1}_{N} f(s)}\,ds\,
=\;
\int_{\mathbb{R}}\psi(s)\,  \int_{{B}}\,\int_{{B}}\,e^{\imath N \left({|x|}^{j}-{|y|}^{j}\right)s}\,
f(x)\,\overline{f(y)}\,dx\,dy\,ds.
\end{align*}
Let $f\in L^{2}(B)$. Applying Fubini's theorem we get
\begin{align}\label{energy01}
\parallel T^{j,1}_{N} f \parallel^{2}_{L^{2}([0,1])}\;\leq\;
 \int_{{B}}\,\int_{{B}}\,K_{N}^{j,1}(x,y)\,
f(x)\,\overline{f(y)}\,dx\,dy.
\end{align}
In the light of the estimate (\ref{kernelsk1})
of Lemma \ref{kernelsk}, it follows that
\begin{align}\label{energy11}
\parallel T^{j,1}_{N} f \parallel^{2}_{L^{2}([0,1])}\;\lesssim\;
 \int_{{B}}\,\int_{{B}}\,
\frac{|f(x)|\,|f(y)|}{1+N^{2} \, \left({|x|}^{j}-{|y|}^{j}\right)^{2}}
\,dx\,dy.
\end{align}
Since $\displaystyle
\int_{0}^{1}\frac{dz}{1+N^2 z^2}\approx \frac{1}{N},\,$
then, applying Lemma \ref{kernelsq} with
$\,h(z)=\left(1+N^2 z^2\right)^{-1}\,$ to
the\\\\ estimate  (\ref{energy11}), we obtain
\begin{align}\label{e1}
\parallel T^{j,1}_{N} f \parallel_{L^{2}([0,1])}\;\lesssim\;
\frac{1}{\sqrt{N}}\,\parallel  f \parallel_{L^{2}(B)},
\qquad\text{for all dimensions}\;\;n\geq j.
\end{align}
To finish this case, it remains
to estimate $\,T^{2,1}f\,$ in the dimension
$\,n=1.$ In view of  (\ref{kernelsk1}) and (\ref{energy01}), we have
\begin{align*}
\hspace{-1 cm}
\parallel T^{2,1}_{N} f \parallel^{2}_{L^{2}([0,1])}\:\lesssim\,
 \int_{-1}^{1}\,\int_{-1}^{1}\,
\frac{|f(x)|\,|f(y)|}{1+N\,
\left|x^2-y^2\right|}
\,dx\,dy.
\end{align*}
Hence, by (\ref{even1}) of Lemma \ref{even},
\begin{align}\label{e2}
\parallel T^{2,1}_{N} f \parallel_{L^{2}([0,1])}\:
\lesssim\,\frac{1}{N^{1/4}}\,
\parallel f \parallel_{L^{2}([-1,1])}.
\end{align}
\textbf{The phase is quadratic in $\textbf{s}\,$ $\,(k=2)$}:\\
For $f\in L^{2}(B)$, using Fubini's theorem
then employing the estimate
(\ref{kernelsk2}) implies
\begin{align}\label{energy21}
\hspace{-0.6 cm}
\parallel T^{j,2}_{N} f \parallel^{2}_{L^{2}([0,1])}\:=\,
 \int_{{B}}\,\int_{{B}}\,K_{N}^{j,2}(x,y)\,
f(x)\,\overline{f(y)}\,dx\,dy
\,\lesssim\, G^{j}_{N}(f)+H^{j}_{N}(f)
\end{align}
where
\begin{align*}
G^{j}_{N}(f)\,=&\, \int_{{B}}\,\int_{{B}}\,
\frac{|f(x)|\,|f(y)|}{1+\sqrt{N}\, \sqrt{\left|{|x|}^{j}-{|y|}^{j}\right|}}
\,dx\,dy,\\
H^{j}_{N}(f)\,=&\, \int_{{B}}\,\int_{{B}}\,
\frac{|f(x)|\,|f(y)|}{1+N\, \left|{|x|}^{j}-{|y|}^{j}\right|}
\,dx\,dy.
\end{align*}
Since $\;\displaystyle  \int_{0}^{1}\,\frac{dz}{1+\sqrt{N}\,\sqrt{z}}
\,\approx\, \frac{1}{\sqrt{N}},\quad
\int_{0}^{1}\,\frac{dz}{1+N\,z}
\,=\,
\text{\large o}\left(\frac{1}{\sqrt{N}}\right),
\quad \text{as}\;\;\; N\longrightarrow+\infty,
$\\\\
then applying Lemma \ref{kernelsq}
to both $\,G^{j}_{N}(f)\,$ and $\,H^{j}_{N}(f)\,$ gives the estimate
\begin{align}\label{energy22}
G^{j}_{N}(f)+
H^{j}_{N}(f)\;\lesssim\; \frac{1}{\sqrt{N}}
\parallel f \parallel^{2}_{L^{2}(B)},
\qquad n\geq j.
\end{align}
It remains to control $\:G^{2}_{N}(f)\,$ and $\,
H^{2}_{N}(f)\:$ in the dimension $\,n=1.\,$
But when $\,n=1,$
\begin{align*}
\hspace*{-0.4 cm}
G^{2}_{N}(f)\,=&\, \int_{-1}^{1}\,\int_{-1}^{1}\,
\frac{|f(x)|\,|f(y)|}{1+\sqrt{N}\, \sqrt{\left|{x}^{2}-{y}^{2}\right|}}
\,dx\,dy\\ \leq&\,\frac{1}{\sqrt{N}}\,
\int_{-1}^{1}\,\int_{-1}^{1}\,
\frac{|f(x)|\,|f(y)|}{
\sqrt{\left|{x}^{2}-{y}^{2}\right|}}
\,dx\,dy
\,\lesssim\,
\frac{1}{\sqrt{N}}\,\parallel f \parallel^{2}_{L^{2}([-1,1])}\quad
\text{by}\;\; (\ref{even2})\; \text{of}\;
\text{Lemma} \;\ref{even}.
\end{align*}
An identical estimate holds for $H^{2}_{N}(f)$ in the dimension $n=1$ because of (\ref{even1}).
Combining this with (\ref{energy22})
and using them in (\ref{energy21}) yields
\begin{align}\label{e3}
\parallel T^{j,2}_{N} f \parallel_{L^{2}([0,1])}
\;\lesssim\;\frac{1}{{N}^{1/4}}
\parallel f \parallel_{L^{2}(B)}.
\end{align}
Finally, bringing the estimates (\ref{e1}),  (\ref{e2}) and (\ref{e3}) together results in (\ref{energy}).
\section*{References}
\bibliography{mybibfile}

\begin{thebibliography}{10}
\bibitem{ahmed1} Ahmed A. Abdelhakim,
A counter example to Strichartz estimates for the inhomogeneous Schr\"{o}dinger equation, Journal of Mathematical Analysis and Applications, 414  (2014), 767-772.
\bibitem{damianorem} Damiano Foschi, Some remarks on the $L^{p}-L^{q}$
boundedness of trigonometric sums and oscillatory integrals,
Communications on pure and applied analysis, 4 (2005),
569-588.
\bibitem{damianoinhom} Damiano Foschi,
Inhomogeneous Strichartz estimates, Journal of Hyperbolic Differential Equations, 2 (2005), 1--24.

\bibitem{loukas} Loukas Grafakos, Classical Fourier Analysis, 2nd ed.,  Springer, 2008.
\bibitem{hardy}
G. H. Hardy, J. E. Littlewood, and G. P\'{o}lya, Inequalities, 2nd ed., Cambridge University
Press, Cambridge, UK, 1952.
 \bibitem{Kato} T. Kato, An $L^{q,r}$-theory for
nonlinear Schr\"{o}dinger equations, Spectral and scattering theory and applications, Adv. Stud. Pure Math.,  Math. Soc. Japan, Tokyo,  23
 (1994),  223--238.
 \bibitem{keeltao} M. Keel and T. Tao, Endpoint
Strichartz estimates, American Journal of Mathematics, 120 (1998), 955--980.
\bibitem{stein} E. M. Stein, Harmonic analysis: real-variable methods, orthogonality, and oscillatory integrals. Princeton Mathematical Series, 43. Princeton University Press, Princeton, NJ, 1993.
 \bibitem{taobook} T. Tao, Nonlinear Dispersive Equations: Local and Global Analysis, CBMS Regional Conference Series in Mathematics, 2006.
\bibitem{vilela}   M. C. Vilela, Strichartz estimates for the nonhomogeneous Schr\"{o}dinger equation, Transactions of the American Mathematical
Society,  359 (2007), 2123--2136.
\bibitem{YoungwooKoh}Youngwoo Koh, Improved
inhomogeneous Strichartz estimates for the Schr\"{o}dinger equation,  Journal of Mathematical Analysis and Applications, 373 (2011), 147--160.
\end{thebibliography}

\bigskip
\bigskip
Mathematics Department, Faculty of Science\\  Assiut University, Assiut,71516, Egypt\\
ahmed.abdelhakim@aun.edu.eg
\end{document}